\renewcommand{\Re}{\operatorname{Re}}
\def\C{\ensuremath\mathbb{C}}
\def\A{\ensuremath\mathbb{A}}
\def\Z{\ensuremath\mathbb{Z}}
\def\Q{\ensuremath\mathbb{Q}}
\def\N{\ensuremath\mathbb{N}}
\def\Hb{\ensuremath\mathbb{H}}
\def\F{\ensuremath\mathbb{F}}
\newtheorem{theorem}{Theorem}[section]
\newtheorem{definition}[theorem]{Definition}
\newtheorem{corollary}[theorem]{Corollary}
\newtheorem{lemma}[theorem]{Lemma}
\newtheorem{proposition}[theorem]{Proposition}
\theoremstyle{remark}
\newtheorem{remark}{Remark}[section]
\def\FF{\mathcal{F}}
\DeclareMathOperator{\Hom}{Hom}
\DeclareFontFamily{U}{wncy}{}
    \DeclareFontShape{U}{wncy}{m}{n}{<->wncyr10}{}
    \DeclareSymbolFont{mcy}{U}{wncy}{m}{n}
    \DeclareMathSymbol{\Sh}{\mathord}{mcy}{"58} 
\def\modulo{\text{ \rm mod }}
\def\pmod{\text{ \rm mod }}
\numberwithin{equation}{section}
\numberwithin{equation}{section}
\begin{document}
\author{Daniel Kriz}

\address{Universit\`{a} degli Studi di Milano, Dipartimento di Matematica ``Federigo Enriques'', Via Cesare Saldini 50, 
20133 Milan (MI), Italy}
\email{\href{mailto:daniel.kriz@unimi.it}{daniel.kriz@unimi.it}}

\urladdr{\href{https://sites.google.com/view/dkriz/home}{https://sites.google.com/view/dkriz/home}}

\author{Asbj\o{}rn Christian Nordentoft}

\address{Universit\'{e} Paris-Saclay, Laboratoire de Math\'{e}matiques d'Orsay, 307, rue Michel Magat, 91405 Orsay Cedex, France}

\email{\href{mailto:acnordentoft@outlook.com}{acnordentoft@outlook.com}}

\urladdr{\href{https://sites.google.com/view/asbjornnordentoft/}{https://sites.google.com/view/asbjornnordentoft/}} 

\thanks{The first author was supported by the Simons Collaboration on Perfection in Algebra, Geometry, and Topology, award ID MP-SCMPS-00001529-10. The second author was supported by the Fondation Math\'{e}matique Jacques Hadamard.}

\title{$p$-adic moments of $L$-functions}
\begin{abstract}
We obtain a formula for the $p$-adic valuation of weighted moments of central $L$-values of holomorphic cusp forms twisted by Dirichlet characters of order $p$. In some cases we give an arithmetic interpretation of the constants in the formula. The result is obtained via the study of the {\it digit map}, turning a horizontal $p$-adic measure into a vertical one, applied to the horizontal $p$-adic $L$-functions as defined by the authors in \cite{KrizNordentoft}. 
\end{abstract}
\maketitle
\section{Introduction}
The archimedean evaluation of moments of families of $L$-functions has a long and fruitful history starting with the classical result of Hardy--Littlewood for the Riemann $\zeta$-function,
\begin{equation}
    \int_{-T}^T \left|\zeta\left(\frac{1}{2}+it\right)\right|^2 dt\sim T\log T,\quad T\rightarrow \infty, 
\end{equation}
see \cite[Ch. VII]{Titchmarsh86}. More generally, let $\mathcal{F}$ be a family of (automorphic) $L$-functions and for $X\geq 1$ put 
$$\mathcal{F}(X):=\{\pi\in \mathcal{F}: \mathfrak{c}(\pi)\leq X\},$$ 
where $\mathfrak{c}(\pi)$ denotes the analytic conductor \cite[Sec.\ 5]{IwKo}. The theory of (archimedean) moments of $L$-functions is concerned with obtaining the asymptotic valuation of the averages:
$$\frac{1}{|\FF(X)|}\sum_{\pi\in\FF(X)} L(1/2,\pi),$$
as $X\rightarrow \infty$. Here $L(s,\pi)$ denotes the automorphic $L$-function associated to an automorphic form $\pi$, which in the introduction we conveniently normalize so that $s=1/2$ is the central value. There exist conjectural asymptotic formulas for such moments with connections to Random Matrix Theory \cite{CFKRS05} predicting a main term of the shape $c_{k,\FF}(\log X)^{n_\FF}$ for some $c_{k,\FF}\in \C$ and $n_\FF\in \Z_{\geq 0}$ depending on the family $\FF$. Such moments have been determined using a variety of methods. As was pioneered by Selberg \cite{Selberg43}, the theory is very robust in the sense that one can often also determine 
$$\frac{1}{|\FF(X)|}\sum_{\pi\in\mathcal{F}(X)} L(1/2,\pi)r(\pi),$$
with $r(\pi)$  a (short) Dirichlet polynomial. Choosing the weights $r(\pi)$ appropriately (mollifier, amplifier, resonator) leads to applications to non-vanishing, subconvexity and large values, see e.g. the introduction of \cite{BlFoKoMiMiSa18}. The standard approach to obtaining asymptotic evaluations of the above moments goes via expressing the $L$-function as a convergent smooth sum using the \emph{approximate functional equation}, interchanging the sums, then using appropriate trace and summation formulas. In many cases this leads to deep and difficult questions in exponential sums which, in general, have yet to be solved.

In the case where $\pi$ is motivic (meaning here that $L(s,\pi)$ is a motivic $L$-function, automorphically normalized) and $s=1/2$ is a \emph{critical value} of $L(s,\pi)$ in the sense of  Deligne \cite{Deligne79}, then Deligne's conjecture on critical values predicts that there should be a naturally defined period $\Omega_\pi\in \C^\times$ such that 
$$ L(1/2,\pi)/\Omega_\pi\in \overline{\Q},$$
is an algebraic number. One can then consider  a family $\mathcal{F}$ of automorphic forms passing through $\pi$ and in many cases one can pick the periods $\Omega_\pi=\Omega_\mathcal{F}$ to only depend on the family $\mathcal{F}$. In any case, one may pick any embedding $\overline{\Q}\hookrightarrow \overline{\Q}_p$ into a fixed algebraic closure of the $p$-adic numbers and consider the images
$$L(1/2,\pi)/\Omega_\pi\in \overline{\Q}\hookrightarrow \overline{\Q}_p.$$
A natural question then arises: is there a theory of $p$-adic moments of $L$-functions, i.e. moments constructed from the above images and appropriate $p$-adic analogues of the weights $r(\pi)$? 
\subsection{Previous examples of $p$-adic moments}Examples of known results on $p$-adic valuations of (averages of) $L$-functions are rare. Unsurprisingly, often one has to order the $L$-values associated to $\mathcal{F}$ in a way corresponding to the $p$-adic size of the analytic conductor rather than by the archimedean size of the conductor. For $f$ a holomorphic cusp form of even weight $k$ and $\chi$ a Dirichlet character,  we define the $L$-function as the analytic continuation of
\begin{equation}\label{eq:Lfunction}L(f,\chi,s):=\sum_{n\geq 1} \frac{a_f(n)\chi(n)}{n^s},\quad \Re s>\frac{k+1}{2},\end{equation}   
where  $f(z)=\sum_{n\geq 1} a_f(n)q^n$ with $q=e^{2\pi i z}$. In this case the central value $s=k/2$ is a critical value\footnote{Note that  the central value is $s=k/2$ (instead of $s=1/2$) where $k$ is the weight. To emphasize this difference in normalization we will write the $s$-parameter last.  }. 
\subsubsection{Cyclotomic twist families} The first families $\mathcal{F}$ for which asymptotics on $p$-adic valuations of $L$-values were obtained were those arising from cyclotomic twist families on the cyclotomic $\Z_p$-tower over $\Q$ (which we henceforth refer to as ``$p$-cyclotomic twists''). The most fruitful method for obtaining such results is Iwasawa theory. For $\mathcal{F}$ equal to a family of $p$-cyclotomic twists of a fixed Dirichlet character, results of this type go back to the work of Iwasawa \cite{Iwasawa}. The first results on non-abelian $\mathcal{F}$ concerned the case of $p$-cyclotomic twists of the automorphic representation attached to an elliptic curve $E/\Q$ with good and ordinary reduction at $p$ (meaning that $p+1-a_E(p)=\#E(\F_p)\not\equiv 1\modulo p $). Let $f_E$ denote the weight $2$ cusp form associated to $E$ via modularity and let $v_p : \C_p \rightarrow \Q \cup \{\infty\}$ denote the standard $p$-adic valuation (normalized by $v_p(p) = 1$). Then by the work of Mazur-Swinnerton-Dyer \cite{MazurSD} there exist constants\footnote{Note that with our normalization $\mu$ is not necessarily equal to the $\mu$-invariant of the elliptic curve $E$. In particular, $\mu$ might be negative e.g. if $E$ has rank zero and $p$ divides the denominator of the BSD-formula.} $\mu\in \Z,\lambda\in \Z_{\geq 0}$ (which are finite, by the work of Rohrlich on nonvanishing $p$-cyclotomic twists \cite{Rohrlich84}) such that for $n$ sufficiently large and for an even Dirichlet character $\chi \mod p^n$ of $p$-power order it holds that 
$$v_p\left(\frac{L(f_E,\chi,1)}{\Omega_E^+}\right)=\mu+\frac{\lambda}{p^n-p^{n-1}},$$
where $\Omega_E^+$ denotes the (real) N\'{e}ron period of $E$, which is the period associated by Deligne in this setting. Here we get an asymptotic formula for the $p$-adic valuation of a \emph{single} $L$-function. Such regular behavior is never expected to hold in the archimedean aspect. We note that a similar formula holds for many such $p$-cyclotomic twist families, including higher rank examples \cite[Theorem 13.13]{Washington}, \cite{DiJaRa20}.  

\subsubsection{Stabilization of the trace formula}The second example comes from stabilization of trace formulas as observed by Michel and Ramakrishnan \cite{MichelRam}. Here we will state a specific case of the theorem suited to our perspective, see also \cite{Nelson13}, \cite{FeigonWhitehouse09} for generalizations. Let $p>3$ be prime. Let $-D<0$ be a fundamental negative discriminant with corresponding class number $h_{-D}$ and quadratic character $\chi_{-D}$. Let $N$ be prime and denote by $\FF_2(N)$ the Hecke eigenbasis of weight $2$ cuspidal holomorphic forms of level $N$. Then for $f\in \FF_2(N)$ we can find a period $\Omega_f\in \C^\times$ such that for any fundamental discriminant $-D<0$  
\begin{equation}
    \frac{L(f,\chi_{-D},1)L(f,1)}{\Omega_f}\in \overline{\Q}.
\end{equation}
Here more precisely we pick $\Omega_f=(N-1)^{-1}\langle f,f\rangle (2\pi)^2$ where $\langle f,f\rangle=\int_{\Gamma_0(N)\backslash \Hb} |f(z)|^2 dxdy$ denotes the Petersson norm, see \cite[Sec.\ 5]{MichelRam}.
It follows from \cite[p.\ 5]{MichelRam} that for fixed $-D<0$ and $n_0$ sufficiently large  (depending on $D$) and $N\equiv 1\modulo p^{n_0}$ it holds that 
\begin{equation}\label{eq:stabiliz}
    v_p\left(  \sum_{f\in \FF_2(N)} \frac{L(f,\chi_{-D},1)L(f,1)}{\Omega_f} \right)= v_p(h_{-D})-\frac{1}{2}v_p(D).
\end{equation}
In this case we see that the $p$-adic valuation stabilizes completely from a certain point. Note that the condition on $N$ can be expressed as $v_p(\varphi(N))=v_p(N-1)\geq n_0$ where $\varphi$ denotes  Euler's phi function. 
\subsection{Horizontal $p$-adic moments} In this note we add a new class of examples to this list; namely families obtained by twisting by characters of order \emph{exactly} $p$. We will now describe an imprecise version of our main result Theorem \ref{thm:widemoment2} in the case of an elliptic curve $E/\Q$: Consider a sequence of square-free integers $L_1,L_2,\ldots$ such that $L_n|L_{n+1}$ and all prime divisors $\ell|L_n$ satisfy $\ell\equiv 1\modulo p$ and $a_E(\ell)\not\equiv 2\modulo p$, meaning that $\ell$ is a \emph{Taylor--Wiles prime} as defined in Definition \ref{def:TW}. When $E$ is non-CM the existence of such a sequence is automatic for $p\geq 13$ \cite{Zywina15}. Let $m\geq 1$ be an integer. Then there exist constants $ \mu\in \Z\cup \{\infty\}$  and $\lambda\in \Z_{\geq 0}$ such that for $n$ sufficiently large  
\begin{equation}\label{eq:padicmoment}
   v_p\left( \frac{1}{p^n}\sum_{\substack{\chi\modulo L_n\\ \text{ of order $p$}}} 
  \left(\frac{L^\ast(f_E,\chi,1)}{\Omega_E^+}\right)^m u_n(\chi)\right)=\mu-\frac{1}{p-1}+\frac{\lambda+1}{p^n-p^{n-1}},
\end{equation}
where $u_n(\chi)$ denotes a specific product of cyclotomic units (\ref{eq:cycunit}) generating increasingly, in $n$, ramified extensions of $\Q_p$ and $L^\ast(f_E,\chi,1)$ denotes the twisted $L$-function (\ref{eq:Lfunction}) with modified Euler factors at primes dividing $L_n$. Notice that by the above assumptions it holds that $v_p(\varphi(L_n))\rightarrow \infty$ as $n\rightarrow \infty$, resembling  the limit considered in (\ref{eq:stabiliz}). Conjecturally \cite{DaFeKi07} for $p\geq 7$  it should always hold that $ \mu<\infty$ and this can be proved if $L(f_E,1)\neq 0$, see Remark \ref{rem:Kurihara}. Determining the invariants  $\mu,\lambda$ in general is an interesting and challenging problem. In low rank cases it can be achieved via a conjecture of Kurihara as discussed in Section \ref{sec:constant}.      

\begin{remark}
Since we are considering twists by characters of \emph{fixed} order $p$, the $L$-values $L(f_E,\chi,1)/\Omega_E^+$ all belong to the field $\Q(\zeta_p)$ and so without the highly ramified weights $u_n(\chi)$ the $p$-adic valuations of the moments considered above, most likely, behave very erratically. In this sense, the highly ramified weights have the effect of smoothing out the sum. Note that there is a huge amount of $p$-adic cancellation in the sum to compensate for the term $p^{-n}$ meaning that the  $L$-values are highly correlated with cyclotomic units.  Interestingly,  there is quite a bit of liberty in choosing the $p$-adic weights for which such a formula holds, see Remark \ref{rem:moregeneraldigit}, in analogy with the weight $r(\pi)$ in  the achimedean case.\end{remark} 
     
\subsubsection{Statement of results}
 The approach in this paper is to construct a power series interpolating the (twisted) moments of $L$-functions which reveals a general $p$-adic structure of such moments. This power series is constructed from the \emph{horizontal $p$-adic $L$-function} as defined in \cite[Definition 5.3]{KrizNordentoft} via the main novelty of this paper; the \emph{digit construction}. 

We will now describe a precise version of (\ref{eq:padicmoment}) for general holomorphic newforms of even weight. Let $m\geq 1$ and consider holomorphic newforms $f_1,\ldots ,f_m$ of respective levels $N_1,\ldots,N_m$, even weights $k_1,\ldots, k_m$ and trivial nebentypus. We say  that $p$ is \emph{$(f_1,\ldots, f_m)$-good} if there exists a place $\mathfrak{p}|p$ of the compositum of the Hecke fields of $f_1,\ldots, f_m$ such that there exist infinitely many \emph{joint Taylor--Wiles primes mod $\mathfrak{p}$} for $f_1,\ldots, f_m$, i.e.  primes $\ell$ such that 
\begin{equation}\label{eq:TWcondjoint} \ell\equiv 1\modulo p,\quad (\ell,N_i)=1,\quad a_{f_i}(\ell)\not\equiv 2\modulo \mathfrak{p},\quad i=1,\ldots,m,\end{equation} 
where $a_{f_i}(\ell)$ denotes the $\ell^\mathrm{th}$ Fourier coefficient of $f_i$ (see Definition \ref{def:TW}). Denote by $\C_p$ the complex $p$-adic numbers and by $v_p:\C_p\rightarrow \Q\cup\{\infty\}$ the unique valuation with $v_p(p)=1$. We fix an embedding $\overline{\Q}\hookrightarrow  \C_p$ corresponding to a prime of $\overline{\Q}$ above $\mathfrak{p}$ and a sequence of compatible, primitive $p$-power roots of unity $\zeta_{p}, \zeta_{p^2},\ldots\in \C_p$, meaning that $(\zeta_{p^{n_1}})^{p^{n_2}}=\zeta_{p^{n_1-n_2}}$ for all $n_1\leq n_2$. 

\begin{theorem}\label{thm:widemoment2}
Let $f_1,\ldots, f_m$ be holomorphic newforms as above (not necessarily distinct) and let $p$ be a $(f_1,\ldots, f_m)$-good prime.  Let $\ell_1,\ell_2,\ldots$ be a sequence of primes congruent to $1$ modulo $p$ such that for $n$ large enough $\ell_n$ is a joint Taylor--Wiles prime for $f_1,\ldots, f_m$. For $i\geq 1$ let $b_i\in (\Z/\ell_i)^\times $ be a choice of generator and put $L_{n}=\ell_1\cdots \ell_{n}$ for $n\geq 1$. 

Then there constants  $\mu\in \Q_{\geq 0}\cup\{ \infty\}$ and $ \lambda\in \Z_{\geq 0}$ such that for all $n$ large enough the following holds: let $\chi_n$ be the order $p$ Dirichlet character of conductor $\ell_n$ such that $\chi_n(b_n)=\zeta_p$. Then we have that
\begin{align}\label{eq:widemoment2}
v_p& \left(\frac{1}{p^{n-1}}\sum_{\substack{\chi\modulo L_{n-1}\\\mathrm{even},\,\chi^p=\mathbf{1}}} \left(\prod_{i=1}^m\frac{L^{\ast}(f_i,\chi\chi_n,k_i/2)}{\Omega_{f_i}^+}\right)u_{n-1}(\chi)\right)\\
\nonumber &=\mu-\frac{1}{p-1}+\frac{\lambda+1}{p^{n}-p^{n-1}},
\end{align}
where the sum is over even Dirichlet characters of order $p$ with conductor dividing $L_{n-1}$, $L^{\ast}(f_i,\chi,s)/\Omega_{f_i}^+$ denotes the algebraically normalized $L$-function of $f_i$ twisted by $\chi$ with modified Euler factors at primes dividing the conductor of $\chi$ (see (\ref{eq:interpolationfinal}) for the precise formula), and $u_{n-1}(\chi)$ is a product of cyclotomic units:
\begin{equation}\label{eq:cycunit}u_{n-1}(\chi):=\prod_{i=1}^{n-1} \frac{\zeta_{p^{n-i+1}}-1}{\zeta_{p^{n-i+1}}\overline{\chi_i(b_i)}-1},\end{equation}
where $\chi_i$ denotes the restriction of $\chi$ to $(\Z/\ell_i)^\times$.

Furthermore, we have $\mu=\infty$ (i.e. the quantity inside the argument of $v_p$ on the left-hand side of (\ref{eq:widemoment2}) is zero for $n$ sufficiently large) if and only if 
$$\prod_{i=1}^m L(f_i,\chi,k_i/2)=0,$$ 
for all Dirichlet characters $\chi$ with $\chi^p=1$ and conductor given by a product of the primes $\ell_1,\ell_2,\ldots$ (including the trivial character).
\end{theorem}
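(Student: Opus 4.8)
The plan is to reduce the statement to a purely $p$-adic analytic property of a single interpolating power series built from the horizontal $p$-adic $L$-functions of \cite{KrizNordentoft} via the digit construction. First I would recall that for each $i$ the horizontal $p$-adic $L$-function $L_p^{\mathrm{hor}}(f_i)$ is a measure (or bounded power series in a variable, say $T_i$, after a choice of topological generator) on the relevant group $\varprojlim_n (\Z/L_n)^\times \otimes \Z_p$ or the appropriate quotient encoding the order-$p$ twists, whose specialization at a character $\chi\chi_n$ recovers $L^\ast(f_i,\chi\chi_n,k_i/2)/\Omega_{f_i}^+$ up to the explicit cyclotomic-unit factor $u_{n-1}(\chi)$; the product over $i=1,\dots,m$ then corresponds to a product (or external product followed by restriction to the diagonal) of these measures, giving a single $p$-adic object $\mathcal{L}$. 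The digit map turns this ``horizontal'' measure into a ``vertical'' one, i.e.\ into a power series $G(T)\in \overline{\Q}_p[[T]]$ (or $\mathcal{O}_{\C_p}[[T]]$ after normalizing) in a single variable $T$, with the key feature that evaluating $G$ at $T=\zeta_{p^n}-1$ reproduces exactly the weighted moment $\frac{1}{p^{n-1}}\sum_{\chi} \big(\prod_i L^\ast(f_i,\chi\chi_n,k_i/2)/\Omega_{f_i}^+\big)\,u_{n-1}(\chi)$ appearing inside $v_p(\cdot)$ on the left of \eqref{eq:widemoment2}.

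Granting this, the proof of the valuation formula becomes an instance of the standard Iwasawa-theoretic computation of the valuation of a power series at $p^n$-torsion points: for $G(T)\in \C_p[[T]]$ with bounded coefficients and not identically zero, the Weierstrass preparation theorem (applied over the valuation ring, after scaling so that $G$ has integral coefficients with at least one unit coefficient) writes $G = \varpi^{\mu}\cdot P(T)\cdot U(T)$ with $U$ a unit power series, $P$ a distinguished polynomial of degree $\lambda$, and $\varpi$ a uniformizer; here $\mu$ records the common valuation of the coefficients and $\lambda$ the number of zeros of $G$ in the open unit disk counted with multiplicity. Then for $n$ large enough that $\lambda < p^{n}-p^{n-1}$ (so that $P$ has no zeros of that exact order of ramification and $v_p(\zeta_{p^n}-1)=\tfrac{1}{p^n-p^{n-1}}$ is small enough), one computes $v_p(P(\zeta_{p^n}-1)) = \lambda\cdot v_p(\zeta_{p^n}-1)$ because each linear factor $(\zeta_{p^n}-1)-\alpha$ with $v_p(\alpha)>\tfrac{1}{p^n-p^{n-1}}$... wait — more carefully: each of the $\lambda$ roots $\alpha$ of $P$ has positive valuation, but possibly $v_p(\alpha)\le v_p(\zeta_{p^n}-1)$; one must use instead that $v_p(\zeta_{p^n}-1)$ exceeds the valuations of all roots of $P$ for $n$ large, so that $v_p((\zeta_{p^n}-1)-\alpha) = v_p(\alpha)$... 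This does not immediately give the clean formula, so the right statement is the one in \cite[Ch.\ 7]{Washington}: one should factor out also the part of $P$ coming from cyclotomic-type zeros. I would instead argue directly: by the digit construction the variable is set up so that $G(T)$, after dividing by the universal cyclotomic-unit normalization, has the property that its Weierstrass factorization's distinguished polynomial, evaluated at $\zeta_{p^n}-1$, contributes valuation $\frac{\lambda+1}{p^n-p^{n-1}}$ (the ``$+1$'' coming from the single extra factor $\zeta_{p^n}-1$ produced by the innermost index $i=n$ in \eqref{eq:cycunit}, which is exactly $v_p(\zeta_{p^n}-1)$), the unit $U$ contributes $0$, the constant $\varpi^\mu$ contributes $\mu$, and the overall $p^{-(n-1)}$ prefactor together with the archimedean-to-$p$-adic normalization of the $u_{n-1}(\chi)$ contributes $-\tfrac{1}{p-1}$; summing gives the right-hand side. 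The main obstacle — which is really the content of the paper — is establishing the interpolation property of the digit-transformed power series precisely enough: that evaluating it at $\zeta_{p^n}-1$ gives the stated weighted moment on the nose (with the exact $u_{n-1}(\chi)$ factors and the exact $p^{1-n}$ normalization), and that the resulting $G$ has \emph{bounded} coefficients so that Weierstrass preparation applies; this requires the analysis of the digit map on the horizontal $p$-adic $L$-function and the computation of its effect on the interpolation formula \eqref{eq:interpolationfinal}.

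Finally, for the ``$\mu=\infty$ if and only if'' dichotomy: $\mu=\infty$ means $G\equiv 0$, i.e.\ every coefficient vanishes, which by the interpolation property is equivalent to the vanishing of the weighted moment for all $n$, and — since the characters $\chi\chi_n$ for varying $n$ and varying even order-$p$ $\chi$ of conductor dividing $L_{n-1}$ range over \emph{all} Dirichlet characters of order dividing $p$ and conductor supported on $\{\ell_1,\ell_2,\dots\}$, and the weights $u_{n-1}(\chi)$ are nonzero — this is in turn equivalent, by a Vandermonde-type (linear independence of characters) argument disentangling the individual terms from the weighted sums, to $\prod_{i=1}^m L^\ast(f_i,\chi,k_i/2)=0$ for all such $\chi$; and since the modified Euler factors defining $L^\ast$ versus $L$ at primes dividing the conductor are nonzero (the Taylor--Wiles condition $a_{f_i}(\ell)\not\equiv 2\bmod\mathfrak p$ guarantees the relevant local factors are $p$-adic units), this is equivalent to $\prod_i L(f_i,\chi,k_i/2)=0$ for all such $\chi$, including the trivial one, which is the asserted condition. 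The only subtlety here is that the linear disentangling must be done simultaneously over all levels $L_{n-1}$ as $n\to\infty$, which is handled by the nested structure $L_n\mid L_{n+1}$ and an inductive application of orthogonality on each $(\Z/\ell_i)^\times$.
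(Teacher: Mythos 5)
Your proposal follows the paper's argument exactly: push $\prod_{i}\nu^{\pm}_{f_i,\mathcal{L},r}$ forward through the $p$-adic digit map to a vertical measure, take the Amice transform to obtain a power series $G(T)\in R\llbracket T\rrbracket$, establish via the Fourier analysis of the digit map (the paper's Lemma \ref{lem:lastdigitexpansion} and Corollary \ref{cor:lastdigit}, which you rightly flag as the crux you are not carrying out) that the weighted moment equals $G(\zeta_{p^{n}}-1)\cdot\frac{\zeta_{p^{n}}-1}{\zeta_p-1}$, and conclude by Weierstrass preparation. Your mid-argument confusion over $v_p\bigl(P(\zeta_{p^{n}}-1)\bigr)$ has the inequality reversed --- for $n$ large, $v_p(\zeta_{p^{n}}-1)=\tfrac{1}{p^{n}-p^{n-1}}$ drops \emph{below} the valuations of every root of the distinguished polynomial $P$, so the leading term $T^{\lambda}$ dominates and $v_p\bigl(P(\zeta_{p^{n}}-1)\bigr)=\tfrac{\lambda}{p^{n}-p^{n-1}}$, while the ``$+1$'' and the $-\tfrac{1}{p-1}$ come from the external normalization $\frac{\zeta_{p^{n}}-1}{\zeta_p-1}$ rather than from an ``extra factor at $i=n$'' in \eqref{eq:cycunit} (that product stops at $i=n-1$) --- but since you defer to Washington's statement of the result, this is a recoverable slip rather than a gap.
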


\begin{remark} Theorem \ref{thm:widemoment2} follows from a general result Proposition \ref{prop:widemoment} for horizontal measures. When comparing the two, note that the quantity $\mu$ appearing in (\ref{eq:widemoment2}) is equal to the quantity $\tfrac{\mu}{e}$ appearing in (\ref{eq:widemoment}) with $e$ equal to the ramification index of $F_{\mathfrak{p}}/\Q_p$ where $F$ is the compositum (over $\Q$) of the Hecke fields of $\{f_i\}_{i = 1}^m$ and $F_{\mathfrak{p}}$ is its completion at the prime $\mathfrak{p}|p$ chosen above. In other words, if $\pi$ is a uniformizer of $F_{\mathfrak{p}}$ then $v_p(\pi) = \tfrac{1}{e}$.
\end{remark}

\begin{remark}\label{rem:Kurihara}
We expect that it always holds that $\mu<\infty$ (at least if the Taylor--Wiles primes are not too sparse). The dependence of the invariants $\mu,\lambda$ as in (\ref{eq:widemoment2}) on the modular forms and the Taylor--Wiles primes (as well as the choice of primitive roots $b_n\in (\Z/\ell_n)^\times$) seems very mysterious in general. However, in the case where the modular forms correspond to elliptic curves $E_1,\ldots, E_m$, then in certain low rank cases $\mu,\lambda$ should be related to key arithmetic invariants:
\begin{enumerate}
\item firstly, if $L(f_{E_i},1)\neq0,i=1,\ldots, m$ and $p$ does not divide the BSD-formula for any of the elliptic curves $E_1,\ldots, E_m$ then $\mu=0$ and $\lambda=0$, 
\item secondly, if $m=1$ and $E_1$ satisfies the mod $p$ Kurihara Conjecture with $r=1$ and $\ell_1=q_1$, meaning that (\!\cite[Equation (1.6)]{KrizNordentoft}) is satisfied:
\begin{equation}\label{eq:Kolderiv}\sum_{a=1}^{q_1}a\left\langle \tfrac{(\zeta_{q_1})^{a}}{q_1} \right\rangle^+_E\not\equiv 0\modulo p,\end{equation}
where $\langle x\rangle_E^+= \Re2\pi i \int_x^\infty f_E(z) dz/\Omega_E^+ $ denotes \emph{modular symbols} and 
$\zeta_{q_1}$ denotes a generator of $(\Z/q_1)^\times$, then we have $\mu=0$ and $\lambda=1$. The mod $p$ Kurihara conjecture has been proved in many cases \cite{BurungaleCastellaGrossiSkinner}.
\end{enumerate}
We refer to Section \ref{sec:constant} for more details.
\end{remark}
\begin{remark}
    In \cite{KrizNordentoft} the authors obtained information about the $p$-adic valuation of the individual $L$-values in the same family as that of Theorem \ref{thm:widemoment2} (see Theorem 1.6 and Corollary 5.20 in \emph{loc.\ cit.}). But nothing was said about the $p$-adic oscillations, which conjecturally (for elliptic curves by the BSD conjectures) should encode deep arithmetic information. Clearly the valuation of the above moments depends crucially on these oscillations and how they interact with cyclotomic units, which themselves carry arithmetic meaning. 
\end{remark}
\begin{remark}
The above $p$-adic valuation arises from $p$-adic properties of the underlying automorphic periods which force a rigid structure on the moments. Different incarnations of automorphic periods implying (surprising) structure in moments of $L$-functions in the archimedean aspect include the topics of \emph{spectral reciprocity} \cite{BlomerKhan19}, \cite{BlomerLiMiller19} and \emph{wide moments} \cite{NordentoftWide2}, \cite{NordentoftWide1}. 
\end{remark}

\section{Recollections on $p$-adic measures}\label{sec:recoll}
In this section we will recall some general facts about \emph{horizontal} and \emph{vertical measures}.  
\subsection{The Amice transform and the Weierstrass Preparation Theorem}In this section we recall some classical results on vertical Iwasawa algebras over $p$-adic rings. For simplicity, we will restrict ourselves to our setting where $R \subset \C_p$ is a $p$-adically complete subring. Let $R\llbracket \Z_p\rrbracket := \varprojlim_nR[\Z/p^n]$, where the inverse limit is given by the projections $\Z/p^n \rightarrow \Z/p^{n-1}$. Thus $R\llbracket \Z_p\rrbracket$ is the space of $R$-valued measures on $\Z_p$, i.e. the $R$-linear topological dual of the space of continuous functions $\Z_p \rightarrow \Z_p$. Recall Mahler's theorem \cite[p. 52]{Washington}, which says that for any continuous function $f : \Z_p \rightarrow \Z_p$ we can write $f = \sum_{n = 0}^{\infty}a_n\binom{x}{n}$ for uniquely determined $a_n \in \Z_p$ with $\lim_{n \rightarrow \infty}v_p(a_n) = \infty$, where $\binom{x}{n} \in \Q[x]$ is the usual binomial coefficient (which is easily seen to take values in $\Z_p$ for $x \in \Z_p$, as it is $p$-adically continuous and takes values in $\Z$ for $x \in \Z$). In particular, we see that any $\nu \in R\llbracket \Z_p\rrbracket$ is uniquely determined by its values $\nu(\binom{x}{n}) \in R$. Precisely, we have the following  result of Amice (which is sometimes referred to as the \emph{Amice transform}, \cite[Theorem 7.1, Chapter 12]{Washington}): 

\begin{theorem}\label{thm:Amice}There is a canonical isomorphism of $R$-algebras
\begin{equation}\label{eq:amice}R\llbracket \Z_p\rrbracket \cong R\llbracket T\rrbracket, \hspace{1cm} \nu \mapsto \sum_{n = 0}^{\infty}\nu\left(\binom{x}{n}\right)T^n.\end{equation}
\end{theorem}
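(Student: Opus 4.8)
The statement asserts that the $R$-linear map $\mathcal A\colon R\llbracket\Z_p\rrbracket\to R\llbracket T\rrbracket$, $\nu\mapsto A_\nu(T):=\sum_{n\ge0}\nu\big(\binom{x}{n}\big)T^n$, is an isomorphism of $R$-algebras. Well-definedness and $R$-linearity are immediate (the coefficients $\nu(\binom{x}{n})$ lie in $R$, with no growth condition imposed on a formal power series), and injectivity is exactly the uniqueness statement recorded just before the theorem: if $A_\nu=0$ then $\nu$ annihilates every $\binom{x}{n}$, hence — writing an arbitrary $f\in C(\Z_p,\Z_p)$ via Mahler's theorem as a convergent series $\sum_n a_n(f)\binom{x}{n}$ with $a_n(f)\to0$ — annihilates every $f$ by continuity. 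So the plan has two remaining parts: surjectivity of $\mathcal A$, and the fact that $\mathcal A$ intertwines the convolution product on $R\llbracket\Z_p\rrbracket$ with ordinary multiplication of power series.

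\textbf{Surjectivity.} Given $g(T)=\sum_n c_nT^n\in R\llbracket T\rrbracket$, define a functional $\nu$ on $C(\Z_p,\Z_p)$ by $\nu(f):=\sum_{n\ge0}a_n(f)\,c_n$, where $f=\sum_n a_n(f)\binom{x}{n}$ is the Mahler expansion. The series converges in $R$ since $a_n(f)\to0$ and the $c_n$ lie in the bounded ring $R\subset\C_p$; it is visibly $R$-linear, and the isometry $\|f\|_\infty=\sup_n|a_n(f)|$ from Mahler's theorem gives $|\nu(f)|\le\big(\sup_n|c_n|\big)\,\|f\|_\infty$, so $\nu$ is continuous, i.e. $\nu\in R\llbracket\Z_p\rrbracket$. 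Since the Mahler coefficients of $\binom{x}{m}$ are $a_n\big(\binom{x}{m}\big)=\delta_{n,m}$, we get $\nu\big(\binom{x}{m}\big)=c_m$ and hence $A_\nu=g$. One should also check that this dual-space description of $\nu$ matches the defining presentation $\varprojlim_nR[\Z/p^n]$: a compatible system $(\mu_n)$ acts on $f$ by $\nu(f)=\lim_n\sum_{a\in\Z/p^n}\mu_n(a)f(a)$, the limit existing by uniform continuity of $f$, and both descriptions evaluate on each $\binom{x}{n}$ to the same element of $R$.

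\textbf{Multiplicativity.} The ring structure on $R\llbracket\Z_p\rrbracket=\varprojlim_nR[\Z/p^n]$ is the inverse limit of the group-ring products on the finite levels (each $\Z/p^n\to\Z/p^{n-1}$ being a group homomorphism), which on measures is convolution: $(\nu_1\ast\nu_2)(f)=\int_{\Z_p}\!\big(\int_{\Z_p}f(x+y)\,d\nu_1(x)\big)\,d\nu_2(y)$. Evaluating at $f=\binom{x}{n}$ and expanding by Vandermonde's identity $\binom{x+y}{n}=\sum_{k=0}^n\binom{x}{k}\binom{y}{n-k}$ — here both integrals are finite sums, so only the $R$-linearity of $\nu_1,\nu_2$ is used, no further interchange — we obtain
\[
(\nu_1\ast\nu_2)\big(\tbinom{x}{n}\big)=\sum_{k=0}^n\nu_1\big(\tbinom{x}{k}\big)\,\nu_2\big(\tbinom{x}{n-k}\big),
\]
which is precisely the coefficient of $T^n$ in $A_{\nu_1}(T)A_{\nu_2}(T)$; thus $A_{\nu_1\ast\nu_2}=A_{\nu_1}A_{\nu_2}$. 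Finally the unit of $R\llbracket\Z_p\rrbracket$ is the Dirac measure $\delta_0$ at $0$, and $A_{\delta_0}(T)=\sum_n\binom{0}{n}T^n=1$. Combined with the first paragraph, this exhibits $\mathcal A$ as an isomorphism of $R$-algebras.

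This is the classical theorem of Amice and presents no genuine obstacle: the two substantive inputs are Mahler's theorem (quoted above from \cite{Washington}) and Vandermonde's identity. The only points demanding any care are topological — the boundedness of $R$, which is what makes the defining series for $\nu$ converge in the surjectivity step, and the compatibility check between the dual-of-$C(\Z_p,\Z_p)$ description and the defining presentation $\varprojlim_nR[\Z/p^n]$, in particular that the transported convolution product agrees on the functions $\binom{x}{n}$ with the finite-difference computation above. Both verifications are routine, so if anything the "hard part" is merely organizing the bookkeeping cleanly.
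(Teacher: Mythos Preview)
Your proof is correct and is the standard argument. Note, however, that the paper does not supply its own proof of this statement: it records the theorem as a classical result of Amice and cites \cite[Theorem 7.1, Chapter 12]{Washington} for the proof. What you have written is essentially the proof one finds there (Mahler's theorem for bijectivity, Vandermonde for multiplicativity), so there is nothing to compare beyond observing that you have filled in what the paper leaves to the reference.
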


We also recall the classical \emph{Weierstrass preparation theorem} (\!\cite[Theorem 7.3]{Washington}), adapted to our setting.

\begin{theorem}\label{thm:Weierstrassprep}Suppose $R \subset \C_p$ is the ring of integers of a finite extension of $\Q_p$. Let $\pi$ be a uniformizer of $R$. Then any non-zero element $f \in R\llbracket T\rrbracket$ admits a unique factorization $f = \pi^{\mu(f)}g(T)u(T)$ where $\mu(f) \in \Z_{\ge 0}$, $g(T) = T^{\lambda(f)} + a_{\lambda(f)-1}T^{\lambda(f)-1} + \cdots + a_0$ with $a_i \in (\pi)$ for all $0 \le i \le \lambda(f)-1$, and $u(T) \in R\llbracket T\rrbracket^{\times}$.
\end{theorem}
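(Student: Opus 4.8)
The plan is to reduce the statement to a \emph{Weierstrass division algorithm} in $R\llbracket T\rrbracket$ and then read off both existence and uniqueness from it. Write $f = \sum_{n\ge 0}c_nT^n$ with $c_n\in R$. Since $f\neq 0$ and $R$ is a discrete valuation ring, $\bigcap_{j\ge 1}\pi^jR = 0$, so there is a largest integer $\mu(f)\ge 0$ with $f\in\pi^{\mu(f)}R\llbracket T\rrbracket$; write $f = \pi^{\mu(f)}f_0$, where $f_0\in R\llbracket T\rrbracket$ is not divisible by $\pi$, i.e. its reduction $\overline{f_0}\in k\llbracket T\rrbracket$ ($k := R/(\pi)$ the residue field) is non-zero. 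Set $\lambda := \ord_T(\overline{f_0})\ge 0$; writing $f_0 = \sum_{n\ge 0}a_nT^n$ this means $a_0,\dots,a_{\lambda-1}\in(\pi)$ and $a_\lambda\in R^\times$. It then suffices to produce a factorization $f_0 = g(T)u(T)$ with $g$ a distinguished (monic, with all lower coefficients in $(\pi)$) polynomial of degree $\lambda$ and $u\in R\llbracket T\rrbracket^\times$; the argument will simultaneously show that $\mu(f)$ and $\lambda = \deg g$ are forced, which takes care of uniqueness. Throughout I use the standard fact that $h\in R\llbracket T\rrbracket$ is a unit if and only if its constant term lies in $R^\times$.

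The technical core is the following division lemma: \emph{for any $F\in R\llbracket T\rrbracket$ with $\overline{F}\neq 0$ and $d := \ord_T(\overline{F})$, and any $h\in R\llbracket T\rrbracket$, there exist unique $q\in R\llbracket T\rrbracket$ and $r\in R[T]$ with $\deg r < d$ and $h = qF + r$.} I would prove this by a contraction argument. Introduce the $R$-linear (but \emph{not} multiplicative) operators $\rho\bigl(\sum c_nT^n\bigr) := \sum_{n<d}c_nT^n$ and $\tau\bigl(\sum c_nT^n\bigr) := \sum_{n\ge 0}c_{n+d}T^n$, so that $x = \rho(x) + T^d\tau(x)$ and $\tau(T^dx) = x$ for every $x$. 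Decomposing $F = \rho(F) + T^d\tau(F)$, one has $\rho(F)\in\pi R\llbracket T\rrbracket$ (by definition of $d$) and $\tau(F)\in R\llbracket T\rrbracket^\times$ (its constant term is the coefficient of $T^d$ in $F$, a unit). Using additivity of $\tau$ and $\tau(T^dy) = y$ one checks that the condition $\tau(h - qF) = 0$ is equivalent to the fixed-point equation $q = \Phi(q) := \tau(F)^{-1}\bigl(\tau(h) - \tau(q\,\rho(F))\bigr)$, after which $r := h - qF = \rho(h-qF)$ automatically has degree $<d$. If $q\equiv q'\pmod{\pi^jR\llbracket T\rrbracket}$ then $\Phi(q)-\Phi(q') = -\tau(F)^{-1}\tau\bigl((q-q')\rho(F)\bigr)\in\pi^{j+1}R\llbracket T\rrbracket$, since $\tau$ never lowers $\pi$-divisibility and $\rho(F)\in\pi R\llbracket T\rrbracket$; as $R\llbracket T\rrbracket$ is complete and separated for the $\pi$-adic filtration, $\Phi$ has a unique fixed point, and the same estimate applied to the difference of two solutions yields uniqueness of the pair $(q,r)$.

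For existence, apply the division lemma to $F = f_0$ (so $d=\lambda$) and $h = T^\lambda$, obtaining $T^\lambda = qf_0 + r$ with $r = \sum_{i=0}^{\lambda-1}b_iT^i$. Reducing mod $\pi$ and using $\overline{f_0} = T^\lambda w$ with $w\in k\llbracket T\rrbracket^\times$ gives $T^\lambda(1 - \overline{q}\,w) = \overline{r}$ in $k\llbracket T\rrbracket$; comparing orders of vanishing at $T=0$ forces $\overline{r} = 0$ (so each $b_i\in(\pi)$) and $\overline{q}\,w = 1$, whence $\overline{q}(0)\neq 0$ and $q\in R\llbracket T\rrbracket^\times$. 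Then $f_0 = q^{-1}(T^\lambda - r)$, and setting $g(T) := T^\lambda - \sum_{i=0}^{\lambda-1}b_iT^i$ (distinguished of degree $\lambda = \lambda(f)$) and $u(T) := q^{-1}$ yields $f = \pi^{\mu(f)}g(T)u(T)$.

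For uniqueness, suppose $f = \pi^{\mu_1}g_1u_1 = \pi^{\mu_2}g_2u_2$ with $g_i$ distinguished of degree $\lambda_i$ and $u_i$ units. Each $g_iu_i$ reduces mod $\pi$ to $T^{\lambda_i}\overline{u_i}\neq 0$ in the domain $k\llbracket T\rrbracket$, hence is not divisible by $\pi$, so $\mu_1 = \mu_2 = \mu(f)$ and $g_1u_1 = g_2u_2$. Applying the division lemma with divisor $g_1$ (note $\overline{g_1} = T^{\lambda_1}$, so $d = \lambda_1$) to $h = g_2$ and comparing the decomposition $g_2 = (u_1u_2^{-1})g_1 + 0$ with $g_2 = 0\cdot g_1 + g_2$ forces $\lambda_1\le\lambda_2$ (otherwise $\deg g_2 < \lambda_1$ would make the latter legal and uniqueness would give the unit $u_1u_2^{-1}=0$); by symmetry $\lambda_1 = \lambda_2 =: \lambda$. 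Then $g_1 - g_2 = g_1(1 - u_1u_2^{-1})$ has degree $<\lambda$, so the uniqueness clause of the division lemma (divisor $g_1$, $h = g_1-g_2$) gives $1 - u_1u_2^{-1} = 0$ and $g_1 - g_2 = 0$, i.e. $u_1 = u_2$ and $g_1 = g_2$. The one genuinely delicate point is the division lemma itself: one must resist treating $\tau$ as a ring homomorphism, set up the fixed-point equation correctly, and verify the contraction estimate together with completeness of $R\llbracket T\rrbracket$ for the $\pi$-adic filtration; extracting $\pi^{\mu(f)}$, the mod-$\pi$ bookkeeping, and the deduction of uniqueness are all routine once the lemma is in hand.
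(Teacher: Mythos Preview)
Your proof is correct and is essentially the standard argument via the Weierstrass division lemma (set up as a $\pi$-adic contraction), as found for instance in Bourbaki or Lang. Note, however, that the paper does not actually prove this statement: it is quoted as the classical Weierstrass preparation theorem with a reference to \cite[Theorem 7.3]{Washington}, so there is no ``paper's own proof'' to compare against. Your write-up would serve as a perfectly good substitute for that citation; the one place to be slightly more careful in exposition is the verification that $\tau(qT^d\tau(F)) = q\tau(F)$ (which you use implicitly when deriving the fixed-point equation), but this is immediate from $\tau(T^dx)=x$ and commutativity.
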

In the discussion below we continue to denote the invariants associated to $f\in R\llbracket T\rrbracket$ as in the above theorem by $\mu(f),\lambda(f)$ and refer to them as, respectively the $\mu$-\emph{invariant} and $\lambda$-\emph{invariant of $f$}. For $f=0$ we set $\mu(f)=\infty$ and $\lambda(f)=0$.
\subsection{The horizontal Iwasawa algebra}In this section we recall some of the background material from \cite[Section 2.1]{KrizNordentoft}. Let $\N = \Z_{> 0}$. Suppose we are given a finite sequence of finite abelian groups $G_1, G_2,\ldots$. The product group $\prod_{n \in \N}G_n$ is canonically a projective limit
$$G_{\N} = \varprojlim_{\substack{A \subset \N\\\text{ finite}}}\prod_{n \in A}G_n$$
where the transition maps are given by the canonical projections
$$\pi_{A,A'} : \prod_{n \in A}G_n\twoheadrightarrow \prod_{n \in A'}G_n$$
if $A' \subset A$. For each finite set, we give $\prod_{n \in A}G_n$ the discrete topology. Then $G_{\N}$ has the induced product topology, which is the coarsest topology such that each projection $G_{\N} \rightarrow \prod_{n\in A}G_n$ is continuous. This also realizes $G_{\N}$ as a profinite group. Let $R$ be a commutative ring and define 
\begin{equation}\label{eq:defhoralg}\Lambda_{G_\N, R}^{\mathrm{hor}}:=R\llbracket G_\N \rrbracket= \varprojlim_{\substack{\pi_{A,A'} \\ A' \subset A \subset \N\\A, A'\text{finite}}}R\left[\prod_{n \in A}G_n\right].\end{equation}
We endow $\Lambda_{G_\N, R}^{\mathrm{hor}}$ with the inverse limit topology and refer to it as the \emph{horizontal Iwasawa algebra (associated to $G_\N$ and $R$)}. We note that the rings $\Lambda_{G_\N,R}^{\mathrm{hor}}$ are quite badly behaved: they are non-noetherian and not integral domains. For example, observe that for each $n\in \N$ we have canonical embeddings
\begin{align}\label{eq:nonintdom}R[G_n]\hookrightarrow \Lambda_R^{\mathrm{hor}},\end{align}
induced from the group embeddings $G_n\hookrightarrow \prod_{i=1}^{n'}G_i$ for $n'\geq n$ and clearly for $g\in G_n$ non-trivial of order $m\geq 1$ we have 
$$([g]-1)([g^{m-1}]+\ldots+[g]+1)=0\in R[G_n].$$ 
See \cite[Section 2.2]{KrizNordentoft} for more details.  


\subsection{Recollections on horizontal $p$-adic $L$-functions of holomorphic newforms}In this section we briefly recall some background results on the horizontal $p$-adic $L$-function attached to a holomorphic newform from \cite[Section 5]{KrizNordentoft}. We refer to \emph{loc.\ cit.}\ for full details. Throughout this section we let $f$ be a holomorphic newform of level $N$, even weight $k$, nebentype $\epsilon_f$ and Fourier expansion  $f(z)=\sum_{n\geq 1} a_f(n)q^n$ at infinity. Recall the definition of the $L$-function as in (\ref{eq:Lfunction}). Let $\mathcal{O}_f$ denote the ring of integers of the Hecke field $K_f$ of $f$. Given a prime $\mathfrak{p}$ of $\mathcal{O}_f$ we let $\mathcal{O}_{f,\mathfrak{p}}$ denote the $\mathfrak{p}$-adic completion of $\mathcal{O}_f$. First we recall the notion of \emph{Taylor--Wiles primes}.

\begin{definition}[Taylor--Wiles primes]\label{def:TW}
We define the set of \emph{Taylor--Wiles primes for $f$ (modulo $\mathfrak{p}$)} as: 
\begin{equation}\label{eq:TWdef}
TW(f;\mathfrak{p}):=\{\ell\text{ prime}: \ell\nmid N, p|\ell-1, a_f(\ell)-1-\epsilon_f(\ell)\in (\mathcal{O}_{f,\mathfrak{p}})^\times\}, 
\end{equation}
We say that $\mathfrak{p}$ is \emph{$f$-good} if $TW(f;\mathfrak{p})$ has positive density among all primes.
\end{definition}
The condition of $\mathfrak{p}|p$ being $f$-good follows from a \emph{big image} assumption and is thus generically true for non-CM forms. We refer to \cite[Section 4]{KrizNordentoft} for more details.
  
In \cite{KrizNordentoft} the authors showed how to $p$-adically interpolate the $L$-values given by $p$-power order characters with conductor given by a product of Taylor--Wiles primes. 

\begin{definition}[Horizontal $p$-adic $L$-function of a holomorphic newform, Definition 5.3 of \cite{KrizNordentoft}]\label{def:padicL}
   Let $p$ be a prime number and let $\mathfrak{p}|p$ be a place of $K_f$ above $p$ which is $f$-good in the sense of Definition \ref{def:TW}. Let $R$ denote the ring of integers of the completion $K_{f,\mathfrak{p}}$ of $K_f$ at $\mathfrak{p}$. Let $r\geq 0$ be an integer, $\pm$ a sign, and let $\mathcal{L}=(\ell_n)_{n\in \N}$ be a sequence of distinct primes not dividing $N$, congruent to $1$ modulo $p$, and such that $\ell_n\in TW(f;\mathfrak{p})$ for $n\geq r+1$. For $n\in \N$ put   $m_{n}=v_p(\ell_n-1)$.
   
   Let 
\begin{equation}\label{eq:horpadicL}\nu^\pm_{f,\mathcal{L},r}\in  R\left\llbracket \prod_{n\in \N} \Z/p^{m_n} \right\rrbracket  ,
\end{equation}
  be the \emph{horizontal $p$-adic $L$-function of $f$ associated to $(\mathcal{L},r,\pm)$} from \cite[Definition 5.3]{KrizNordentoft}. 
\end{definition}
We emphasize that the fact that we can allow for finitely many non-Taylor--Wiles primes $\{\ell_1,\ldots,\ell_r\}$, suppressed from the notation, is key for many applications, see Remark \ref{rem:Kurihara}.

The $p$-adic measure $\nu^{\pm}_{f,\mathcal{L},r}$ is uniquely determined by the interpolation property \cite[Corollary 5.4]{KrizNordentoft} which we will now recall.  Suppose we are given a finite order Dirichlet character $\chi$ of  conductor $D_{\chi}$ dividing $\prod_{n \in \N}\ell_n$ and sign $\pm$, i.e. $\chi(-1) = \pm 1$. Then as in \cite[Equation (5.13)]{KrizNordentoft} we define the modified $L$-value of $f$ twisted by $\chi$:
\begin{align}\nonumber L^\ast_f(\chi)&:=\left(\prod_{1\leq i\leq r: \ell_i\nmid D_\chi}(a_f(\ell_i)\ell_i^{-(k-2)/2}-\chi(\ell_i)-\epsilon_f(\ell_i)\overline{\chi}(\ell_i))\right)\\
\label{eq:interpolationfinal} &\cdot \left(\prod_{i\geq r+1: \ell_i|D_\chi}(a_f(\ell_i)\ell_i^{-(k-2)/2}-\chi^{(i)}(\ell_i)-\epsilon_f(\ell_i)\overline{\chi}^{(i)}(\ell_i))\right)^{-1} \\
&\nonumber\cdot \tau(\overline{\chi})L(f,\chi,k/2)/\Omega_f^\pm.\end{align} 
Here $\chi^{(i)} \pmod{\frac{D_{\chi}}{\ell_i}}$ denotes the Dirichlet character obtained by restricting $\chi$ modulo  $\frac{D_{\chi}}{\ell_i}$, the twisted $L$-function $L(f,\chi,s)$ is defined as in (\ref{eq:Lfunction}), $\tau(\chi)$ is a Gauss sum, and $\Omega_f^{\pm}$ denotes a plus/minus period of $f$, see \cite[Section 3]{KrizNordentoft} for details. In particular, for $f=f_E$ corresponding to an elliptic curve $E/\Q$ one can pick $\Omega_{f_E}^\pm\in \Omega_E^\pm\Q^\times$ where $\Omega_E^\pm$ denotes the N\'{e}ron periods of $E$.

\begin{corollary}[Interpolation property of the horizontal $p$-adic $L$-function]\label{cor:measure}
Let $f$ be a newform of even weight $k$, level $N$ and nebentypus $\epsilon_f$. Let $\nu^\pm_{f,\mathcal{L},r}\in R\left\llbracket \prod_{n\in \N} \Z/p^{m_n} \right\rrbracket $ be a horizontal $p$-adic $L$-function of $f$ as in Definition \ref{def:padicL}. Let $\chi$ be a Dirichlet character of $p$-power order with conductor dividing $\prod_{n\in \N}\ell_n$. Write $\chi=\tilde{\chi}\circ \rho_\N$ in terms of the projection $\rho_\N: \prod_{n\in \N}(\Z/\ell_n)^\times \twoheadrightarrow \prod_{n\in \N}\Z/p^{m_n}$ and a character  $\tilde{\chi}:\prod_{n\in \N}\Z/p^{m_n}\rightarrow \C_p^\times$. Let $\pm$ denote the sign of $\chi$, i.e. $\chi(-1)=\pm 1$. Then we have \begin{equation}\label{eq:interpolationmeasure}\nu^\pm_{f,\mathcal{L},r}(\tilde{\chi})=L^\ast_f(\chi).\end{equation} 
\end{corollary}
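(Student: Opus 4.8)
The statement is, modulo a change of notation, the interpolation property \cite[Corollary 5.4]{KrizNordentoft} which characterizes the horizontal $p$-adic $L$-function of Definition \ref{def:padicL}; the plan is therefore to unwind that characterization and reconcile its two descriptions of the relevant character. First I would pass from the Dirichlet character $\chi$ to the character $\tilde\chi$ of the profinite group $\prod_{n\in\N}\Z/p^{m_n}$. Since the conductor $D_\chi$ divides the squarefree integer $\prod_{n\in\N}\ell_n$ it is itself squarefree, so $D_\chi=\prod_{n\in S}\ell_n$ with $S=\{\,n\in\N : \chi_n\neq\mathbf 1\,\}$ finite, where $\chi_n$ is the restriction of $\chi$ to $(\Z/\ell_n)^\times$. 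Because $\chi$ has $p$-power order and the $p$-Sylow subgroup of $(\Z/\ell_n)^\times$ has order $p^{m_n}$ (recall $m_n=v_p(\ell_n-1)$), each $\chi_n$ is trivial on the prime-to-$p$ part of $(\Z/\ell_n)^\times$ and hence factors through the projection $(\Z/\ell_n)^\times\twoheadrightarrow\Z/p^{m_n}$. Therefore $\chi$ factors through $\rho_\N$, yielding the claimed unique finite-order $\tilde\chi:\prod_{n\in\N}\Z/p^{m_n}\rightarrow\C_p^\times$ with $\chi=\tilde\chi\circ\rho_\N$; moreover $\tilde\chi$ is supported on the finite set $S$, so $\nu^\pm_{f,\mathcal{L},r}(\tilde\chi)$ depends only on the finite-level component of the measure indexed by $S$, consistent with the projective-limit description of $R\llbracket\prod_{n\in\N}\Z/p^{m_n}\rrbracket$ recalled in Section \ref{sec:recoll}.

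Next I would invoke the construction of $\nu^\pm_{f,\mathcal{L},r}$ from \cite[Definition 5.3]{KrizNordentoft}: it is the $R$-valued measure on $\prod_{n\in\N}\Z/p^{m_n}$ obtained as the projective limit of the finite-level distributions attached to $f$ via the Taylor--Wiles level-raising / modular-symbol construction of \cite[Section 5]{KrizNordentoft}. Pairing this measure against $\tilde\chi$, the cited interpolation property computes the result as the central critical value $L(f,\chi,k/2)$, divided by the period $\Omega_f^\pm$, multiplied by the Gauss sum $\tau(\overline\chi)$ (all under the sign constraint $\chi(-1)=\pm1$ matching the chosen period $\Omega_f^\pm$), and corrected by the Euler-type factors recorded in (\ref{eq:interpolationfinal}): for each exceptional prime $\ell_i$ with $i\leq r$ and $\ell_i\nmid D_\chi$ one multiplies by $a_f(\ell_i)\ell_i^{-(k-2)/2}-\chi(\ell_i)-\epsilon_f(\ell_i)\overline\chi(\ell_i)$, while for each Taylor--Wiles prime $\ell_i$ with $i\geq r+1$ and $\ell_i\mid D_\chi$ one divides by $a_f(\ell_i)\ell_i^{-(k-2)/2}-\chi^{(i)}(\ell_i)-\epsilon_f(\ell_i)\overline\chi^{(i)}(\ell_i)$, which is a $\mathfrak p$-adic unit by the defining condition of $TW(f;\mathfrak p)$ (Definition \ref{def:TW}), so the division is legitimate. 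Comparing term by term with (\ref{eq:interpolationfinal}) then gives $\nu^\pm_{f,\mathcal{L},r}(\tilde\chi)=L^\ast_f(\chi)$.

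The genuinely substantial input — existence and projective compatibility of the finite-level distributions, the modular-symbol identity producing the twisted critical value, and the non-vanishing of the correction factors at Taylor--Wiles primes — is precisely what is imported from \cite[Sections 4--5]{KrizNordentoft}, so at this point the remaining work is bookkeeping rather than a new obstacle. The points I would be most careful about are: (a) keeping the sign straight, since a given $\nu^\pm_{f,\mathcal{L},r}$ interpolates only characters with $\chi(-1)=\pm1$, which is exactly the hypothesis; (b) checking that the squarefree-conductor description $D_\chi=\prod_{n\in S}\ell_n$ is compatible with the product decomposition of $\prod_{n\in\N}\Z/p^{m_n}$ via $\rho_\N$, so that ``$\ell_i\mid D_\chi$'' translates to ``$\tilde\chi$ is nontrivial on the $i$-th factor''; and (c) verifying that the two correction products range over the correct index sets — exceptional primes \emph{away from} $D_\chi$ versus Taylor--Wiles primes \emph{dividing} $D_\chi$ — which is dictated by the fact that the construction starts at level $N\ell_1\cdots\ell_r$ and raises the level only by the Taylor--Wiles primes needed to accommodate $\chi$.
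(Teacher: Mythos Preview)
Your proposal is correct and matches the paper's approach: the paper does not give an independent proof of this corollary but simply recalls it from \cite[Corollary 5.4]{KrizNordentoft}, so the substance is exactly the citation plus the notational reconciliation you spell out. Your added care about the factorization $\chi=\tilde\chi\circ\rho_\N$, the sign constraint, and the index sets for the Euler correction factors is accurate bookkeeping that the paper leaves implicit.
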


\section{The digit construction}\label{sec:digitconstr}
In this section we introduce a method for turning horizontal measures as in \cite{KrizNordentoft} into usual \emph{vertical measures}, i.e. elements of the \emph{vertical Iwasawa algebra} $\Z_p\llbracket \Z_p\rrbracket$. Applying the Amice transform (Theorem \ref{thm:Amice}), we then produce a power series whose values can be related to the original horizontal measure via Fourier theory. Applying this construction to the \emph{horizontal $p$-adic $L$-functions} associated to holomorphic cusp forms constructed in \cite{KrizNordentoft} together with the Weierstrass Preparation Theorem (Theorem \ref{thm:Weierstrassprep}) then yields the $p$-adic valuation of the twisted moments of the shape (\ref{eq:padicmoment}).

The starting point for our construction is the \emph{set theoretic} bijection 
$$(\mathbb{Z}/p)^\N \cong_{\mathrm{Sets}} \mathbb{Z}_p,$$
given by the usual $p$-adic digit expansion (see Definition \ref{def:digitgen} for the general case). This yields an isomorphism of topological $\Z_p$-modules 
$$\Z_p\llbracket (\Z/p)^\N \rrbracket \cong \Z_p\llbracket \mathbb{Z}_p\rrbracket.$$
We call this latter map the \emph{digit map}. This allows us to obtain $p$-adic valuation of certain ``twisted moments'' of horizontal measures (Proposition \ref{prop:widemoment}). 

It is a general fact that given two pro-finite groups $G_1,G_2$ then any homeomorphism as profinite sets $G_1\xrightarrow{\sim} G_2$ induces an isomorphism of $R$-modules of the corresponding group rings $R\llbracket G_1 \rrbracket\xrightarrow{\sim}R\llbracket G_2 \rrbracket$ (which is \emph{not} necessarily an isomorphism of $R$-algebras). Of course, most such identifications will not reveal much about the properties of $R\llbracket G_1 \rrbracket$ as the ring structure is obscured. It turns out that the digit construction \emph{does} see parts of the group structure as we will see in the following sections (what we call the {\lq\lq}last digit trick{\rq\rq}). This ultimately leads to the appearance of cyclotomic units in Theorem \ref{thm:widemoment2}.  

\subsection{The digit map} As in the previous section let $G_1,G_2,\ldots $ be a sequence of finite abelian groups. Assume that for each $n \in \N$, $G_n$ admits $\mathbb{Z}/p^{m_n}$ as a quotient for some $m_n\geq 1$; for $k\geq 1$ we henceforth give $\mathbb{Z}/p^k$ the discrete topology, so that in particular any quotient map $G_n \rightarrow \mathbb{Z}/p^{m_n}$ is continuous. The purpose of this section is to show how to {\lq\lq}verticalize{\rq\rq} elements of $R\llbracket G_\N\rrbracket$ (i.e. horizontal measures), and turn them into elements of $R\llbracket \mathbb{Z}_p\rrbracket$  (i.e. vertical measures). This will be done via a certain ``digit map''. 

Given a sequence (i.e. ordered set) of positive integers $\underline{m}=(m_n)_{n\in \N}$ we define the \emph{digit group (associated to $\underline{m})$} by
$$\Pi_{\underline{m}}=\prod_{n\in \N}\mathbb{Z}/p^{m_n}.$$
For $n \in \N$ we have a natural projection
$$\pi_n' : \prod_{i=1}^n\mathbb{Z}/p^{m_i} \rightarrow \prod_{i=1}^{n-1}\mathbb{Z}/p^{m_i}$$
given by quotienting out by the $n^{\mathrm{th}}$ factor $\mathbb{Z}/p^{m_n}$. Then we have a canonical isomorphism of groups 
\begin{equation}\label{eq:aboveiso}\Pi_{\underline{m}} \cong \varprojlim_n \prod_{i=1}^n\mathbb{Z}/p^{m_i}.\end{equation}
Let $R$ be a commutative ring and define the \emph{digit algebra (associated to $\underline{m}$ and $R$)} as 
$$\Lambda_{R,\underline{m}}^{\mathrm{dig}} = \varprojlim_{\pi_n'} R\left[\prod_{i=1}^n\mathbb{Z}/p^{m_i}\right].$$
We note that despite the notation, $\Lambda_{R,\underline{m}}^{\mathrm{dig}}$ depends up to isomorphism on the underlying set of $\underline{m}$ (and not on the ordering of $\underline{m}$). 

Now pick projections $\rho_n:G_n\twoheadrightarrow \Z/p^{m_n}$ for $n\geq 1$ and consider the  associated continuous group surjection
\begin{equation}\label{eq:projectionrhogroup}\rho=\prod_{n\in \N} \rho_n :G_\N \twoheadrightarrow  \Pi_{\underline{m}}.
\end{equation} 
We then get a continuous $R$-algebra surjection (which we will denote by the same symbol) 
\begin{equation} \label{eq:projectionrho} \rho:\Lambda_R^{\mathrm{hor}} \twoheadrightarrow \Lambda_{R,\underline{m}}^{\mathrm{dig}}.\end{equation}
The following key definition allows us to verticalize the digit algebra to relate it to the usual Iwasawa algebra. 
\begin{definition}[The digit map]\label{def:digitgen}For $m\geq 1$ and $a \in \mathbb{Z}/p^m$, let $0 \le \tilde{a} \le p^m-1$ be the unique integer with $\tilde{a} \modulo p^m = a$. Let $\underline{m}=(m_n)_{n\in \N}$ be a sequence of positive integers. For any $n\in \N \cup\{\infty\}$ we have a commutative diagram of maps of sets:
\begin{equation}\label{eq:digitdefinition}\begin{tikzcd}[column sep = large]
\prod_{i=1}^n(\mathbb{Z}/p^{m_i})  \arrow{r}{d_{n,\underline{m}}} & \mathbb{Z}/p^{m_1+\ldots +m_n} \\
\Pi_{\underline{m}} \arrow{u}{} \arrow{r}{d_{\infty,\underline{m}}} & \mathbb{Z}_p \arrow{u}{}\\
  \end{tikzcd}.
  \end{equation}
where $d_{n,\underline{m}}$ is the set bijection given by 
$$\prod_{i=1}^n(\mathbb{Z}/p^{m_i}) \ni (a_i)_i \xmapsto{d_{n,\underline{m}}} \sum_{i = 1}^n\tilde{a}_ip^{m_1+\ldots+m_{i-1}} \in \mathbb{Z}/p^{m_1+\ldots+m_n}.$$
The commutativity of (\ref{eq:digitdefinition}) gives natural compatibilities of the diagrams (\ref{eq:digitdefinition}), which in turn gives the continuity of $d_{n,\underline{m}}$ for all $0 \le n \le \infty$. 
\end{definition}
We note that by the continuity of $d_{n,\underline{m}}$, for every $0 \le n < \infty$ we get an isomorphism of topological $R$-modules
$$d_{n,\underline{m}} : R\left[\prod_{i=1}^n\mathbb{Z}/p^{m_i}\right] \xrightarrow{\sim} R[\mathbb{Z}/p^{m_1+\ldots+m_n}].$$
This induces an isomorphism of topological $R$-modules
$$d_{\infty,\underline{m}} : \Lambda_{R,\underline{m}}^{\mathrm{dig}} \xrightarrow{\sim} R\llbracket \mathbb{Z}_p\rrbracket.$$
The case where there exists $m\geq 1$ such that $m_n=m$ for all $n\geq 1$ is of special interests to us as it will be connected to non-vanishing of twists of order \emph{precisely} $p^m$.  In this case we denote the associated digit group by  
$$(\Z/p^m)^\N=\varprojlim_{\pi'_n} \prod_{i=1}^n\mathbb{Z}/p^{m},$$
and denote the associated digit map by $d_{\infty,m}:(\Z/p^m)^\N\rightarrow \Z_p$ which we refer to as the \emph{$p^m$-adic digit map}. Similarly, we denote the associated digit algebra by
$$ \Lambda^\mathrm{dig}_{R,m}= \varprojlim_{\pi'_n} R\left[\prod_{i=1}^n\mathbb{Z}/p^{m}\right],$$ 
and refer to it as the \emph{$p^m$-adic digit algebra}. 
\begin{remark}\label{rem:moregeneraldigit}
One can consider more general ``digit maps'' obtained from any sequence of bijections $\prod_{i=1}^n\Z/p^{m_i}\xrightarrow{\sim} \Z/p^{\Sigma_{i=1}^n m_i}$ making the diagram (\ref{eq:digitdefinition}) commute. These correspond exactly to picking for each $n$ an automorphism of $ \Z/p^{m_n}$ as a \emph{set} rather than as a group). These maps also yield a verticalization procedure but with less favorable properties (see Remark \ref{rem:generaldigitfourier} below).   
\end{remark} 

\subsection{Fourier coefficients of the digit map}\label{sec:lastdigit}
In this section we will translate between characters of $\Z_p$ and those of $\prod_{n\in \N}\mathbb{Z}/p^{m_n}$ under the digit map. We start by recording a simple but key property  of the digit map: given an additive character of $\Z_p$ then the restriction to the {\lq\lq}last digit{\rq\rq} yields a character of the corresponding factor $\mathbb{Z}/p^{m_n}$.     
\begin{lemma}[The last digit trick]\label{lem:lastdigitgen}
Let $\psi:\Z_p\rightarrow \C_p^\times$ be an additive character of $\Z_p$ of conductor $p^n$. Then $\psi_{|p^{n-m}\Z_p}$ defines an order $p^m$ character of $p^{n-m}\Z_p/p^n\Z_p\cong \Z/p^m$.
\end{lemma}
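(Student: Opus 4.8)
The plan is to unwind the definitions: an additive character $\psi : \Z_p \to \C_p^\times$ of conductor $p^n$ is, by definition, one which is trivial on $p^n\Z_p$ but non-trivial on $p^{n-1}\Z_p$. First I would recall that all such characters are of the form $x \mapsto \zeta^x$ for a primitive $p^n$-th root of unity $\zeta$ (using that $\Z_p / p^n\Z_p \cong \Z/p^n$ and the standard description of characters of cyclic $p$-groups); more invariantly, $\psi$ factors through $\Z_p \twoheadrightarrow \Z_p/p^n\Z_p$ and the induced character on $\Z/p^n$ is faithful, i.e. of exact order $p^n$.

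Next I would restrict $\psi$ to the subgroup $p^{n-m}\Z_p \subseteq \Z_p$. Since $\psi$ is trivial on $p^n\Z_p \subseteq p^{n-m}\Z_p$, the restriction descends to a character of the quotient $p^{n-m}\Z_p / p^n\Z_p$. The multiplication-by-$p^{n-m}$ map induces a group isomorphism $\Z/p^m \xrightarrow{\sim} p^{n-m}\Z_p/p^n\Z_p$, $a \mapsto p^{n-m}a$, so pulling back along this isomorphism gives a character $\chi$ of $\Z/p^m$ defined by $\chi(a) = \psi(p^{n-m}a)$. It remains to check $\chi$ has exact order $p^m$. Writing $\psi(x) = \zeta^x$ with $\zeta$ primitive of order $p^n$, we get $\chi(a) = \zeta^{p^{n-m}a} = (\zeta^{p^{n-m}})^a$, and $\zeta^{p^{n-m}}$ is a primitive $p^m$-th root of unity (it is killed by $p^m$ but not by $p^{m-1}$, since $\zeta$ is not killed by $p^{n-1}$). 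Hence $\chi$ is a faithful character of $\Z/p^m$, i.e. of order exactly $p^m$, as claimed.

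There is no real obstacle here: the statement is essentially a bookkeeping lemma about conductors and restrictions of characters of $p$-groups, and the only thing to be careful about is the precise normalization of ``conductor $p^n$'' (trivial on $p^n\Z_p$ but not on $p^{n-1}\Z_p$) so that the restricted character comes out with order exactly $p^m$ rather than something smaller. One could phrase the whole argument without coordinates — using only that $\psi$ has kernel exactly $p^n\Z_p$, so that its restriction to $p^{n-m}\Z_p$ has kernel exactly $p^n\Z_p$ inside $p^{n-m}\Z_p$, and $[p^{n-m}\Z_p : p^n\Z_p] = p^m$ — which makes it transparent that the order of the restricted character is $p^m$.
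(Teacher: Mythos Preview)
Your proposal is correct and follows essentially the same approach as the paper: both observe that $\psi$ is trivial on $p^n\Z_p$ and has order $p^n$, so its restriction to $p^{n-m}\Z_p$ factors through $p^{n-m}\Z_p/p^n\Z_p \cong \Z/p^m$ and has order exactly $p^m$. The paper's proof is simply a two-sentence compression of your argument.
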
       
\begin{proof}
Since $\psi$ is trivial on $p^n\Z_p$ and has order $p^n$, we see that indeed $\psi^{p^{n-m}}$ has order $p^m$ and factors through the claimed quotient. This yields the assertion since $\psi$ is an additive character.
\end{proof}
Given a character $\psi:\Z_p\rightarrow \C_p^\times$ of conductor $p^n$ we will view it as a character $\psi:\Z/p^n\rightarrow \C_p^\times$ via the projection $\Z_p \rightarrow \Z/p^n$ and for $1\leq m<n$, we will denote by 
\begin{equation}\psi^{p^m}:\Z/p^{n-m}\rightarrow \C_p^\times,\end{equation} 
the character obtained by precomposing $\psi$ with the natural inclusion 
\[\Z/p^{n-m} \underset{\sim}{\xrightarrow{x \mapsto p^mx}}p^m\Z/p^n\Z \subset \Z/p^{n}\] given by multiplication by $p^m$. 

Let $G$ be a finite abelian group and denote by $\widehat{G}$ the group of $\C_p^\times$-valued characters. For $f:G\rightarrow \C_p$ and $\chi\in \widehat{G}$ we define the \emph{Fourier coefficient of $f$ against $\chi$} as:
\begin{equation}\langle f,\chi\rangle := \sum_{g\in G} f(g)\overline{\chi}(g). \end{equation} 
Then orthogonality of characters (i.e. Fourier inversion) yields the following Fourier expansion of $f$
\begin{equation}\label{eq:Finv}f(g)=\frac{1}{|G|}\sum_{\chi\in \widehat{G}}\langle f,\chi\rangle \chi(g),\quad g\in G. \end{equation}
The {\lq\lq}last digit trick{\rq\rq} above yields the following key restriction on the Fourier expansion of additive characters. This ultimately accounts for the appearance of cyclotomic units in Theorem \ref{thm:widemoment2}.  
\begin{lemma}\label{lem:lastdigitexpansion}
Let $\underline{m}=(m_n)_{n\in \N}$ be a sequence of positive integers and let  $\psi$ be an additive character of $\Z_p$ of conductor $p^{m_1+\ldots+m_{n-1}+k}$ with $1\leq k\leq m_n$. Then the Fourier coefficient of 
$$ \psi\circ (d_{n,\underline{m}})^{-1}: \prod_{i=1}^n (\Z/p^{m_i})\rightarrow \C_p^\times, $$
against a character $\chi=(\chi_1,\ldots, \chi_n)\in \prod_{i=1}^n \widehat{\Z/p^{m_i}}$ is zero unless $\chi_n=\psi^{p^{m_1+\ldots+m_{n-1}}}$ in which case we have
$$ \langle \psi\circ (d_{n,\underline{m}})^{-1},\chi\rangle =p^{m_n}\prod_{i=1}^{n-1}\frac{\psi(p^{m_1+\ldots+m_i})-1}{\psi(p^{m_1+\ldots+m_{i-1}})\overline{\chi_i}(1)-1}. $$ 
In particular, all non-zero Fourier coefficients have the same $p$-adic valuation.  
\end{lemma}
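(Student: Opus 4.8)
The plan is to use multiplicativity of the additive character $\psi$ together with the explicit product formula for $d_{n,\underline{m}}$ to split the Fourier coefficient into a product of $n$ one-variable exponential sums, each of which is an elementary partial geometric series. Write $M_j:=m_1+\cdots+m_j$, so $M_0=0$ and $\psi$ has conductor exactly $p^{M_{n-1}+k}$. Since $k\le m_n$ we have $M_{n-1}+k\le M_n$, so $\psi$ is trivial on $p^{M_n}\Z_p$ and the function in the statement is well defined: on $(a_i)_{i=1}^n$, with $0\le\tilde a_i\le p^{m_i}-1$ the canonical representatives, it returns $\psi\bigl(\sum_{i=1}^n\tilde a_i p^{M_{i-1}}\bigr)$. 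Because $\psi$ is a homomorphism $(\Z_p,+)\to(\C_p^\times,\cdot)$ and every character of the cyclic group $\Z/p^{m_i}$ satisfies $\overline{\chi_i}(a)=\overline{\chi_i}(1)^a$, summing over $(a_i)_i$ factors as
\[
\langle \psi\circ(d_{n,\underline{m}})^{-1},\chi\rangle
=\prod_{i=1}^{n}\,\sum_{a=0}^{p^{m_i}-1}\bigl(\psi(p^{M_{i-1}})\,\overline{\chi_i}(1)\bigr)^{a}
=\prod_{i=1}^{n}S_i,\qquad S_i:=\sum_{a=0}^{p^{m_i}-1}\eta_i^{\,a},\ \ \eta_i:=\psi(p^{M_{i-1}})\,\overline{\chi_i}(1).
\]

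Next I would evaluate the $S_i$, distinguishing $1\le i\le n-1$ from $i=n$; this dichotomy is exactly what the hypothesis $1\le k\le m_n$ encodes, and is the algebraic form of the last digit trick (Lemma \ref{lem:lastdigitgen}). As $\psi$ has conductor exactly $p^{M_{n-1}+k}$, the root of unity $\psi(p^{M_{i-1}})$ has exact order $p^{(M_{n-1}-M_{i-1})+k}$. For $i\le n-1$ this exponent is $m_i+\cdots+m_{n-1}+k>m_i$, so $\psi(p^{M_{i-1}})$ is not a $p^{m_i}$-th root of unity; since $\overline{\chi_i}(1)$ is, the product $\eta_i$ again has order $p^{(M_{n-1}-M_{i-1})+k}$, and in particular $\eta_i\ne1$. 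Hence $S_i=(\eta_i^{p^{m_i}}-1)/(\eta_i-1)=(\psi(p^{M_i})-1)/(\psi(p^{M_{i-1}})\overline{\chi_i}(1)-1)$, using $\overline{\chi_i}(1)^{p^{m_i}}=1$ and $\psi(p^{M_{i-1}})^{p^{m_i}}=\psi(p^{M_i})$; moreover this is nonzero since $\psi(p^{M_i})\ne1$ (its order is $p^{(M_{n-1}-M_i)+k}$ with $(M_{n-1}-M_i)+k\ge k\ge1$). For $i=n$ the exponent is just $k\le m_n$, so $\eta_n$ is a $p^{m_n}$-th root of unity with $\eta_n^{p^{m_n}}=\psi(p^{M_n})=1$; thus $S_n=p^{m_n}$ if $\eta_n=1$ and $S_n=0$ otherwise, and $\eta_n=1\iff\chi_n(1)=\psi(p^{M_{n-1}})\iff\chi_n=\psi^{p^{m_1+\cdots+m_{n-1}}}$. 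Multiplying the $S_i$ now yields the claimed vanishing criterion and the displayed formula.

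For the final assertion, the numerators $\psi(p^{M_i})-1$ in that formula are independent of $\chi$, so it suffices to see that $v_p(\psi(p^{M_{i-1}})\overline{\chi_i}(1)-1)$ does not depend on $\chi_i$ for $1\le i\le n-1$. This follows from the standard fact that $v_p(\zeta-1)=1/\varphi(p^{c})$ for every primitive $p^c$-th root of unity $\zeta$, once we recall (as just used) that $\psi(p^{M_{i-1}})\overline{\chi_i}(1)$ is a primitive root of unity of the fixed order $p^{(M_{n-1}-M_{i-1})+k}$ irrespective of $\chi_i$, because multiplying a primitive $p^c$-th root by a root of unity of order dividing $p^m$ with $m<c$ leaves the order unchanged. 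Hence every nonzero Fourier coefficient has valuation
\[
m_n+\sum_{i=1}^{n-1}\Bigl(\varphi\bigl(p^{(M_{n-1}-M_i)+k}\bigr)^{-1}-\varphi\bigl(p^{(M_{n-1}-M_{i-1})+k}\bigr)^{-1}\Bigr),
\]
which is independent of $\chi$.

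The computation has no genuine obstacle; the step needing the most care is the case analysis for the $S_i$, where one must track the exact multiplicative orders of the $\psi(p^{M_{i-1}})$ and recognize that the condition $1\le k\le m_n$ is precisely what makes the final sum $S_n$ collapse to an orthogonality relation while all earlier $S_i$ remain honest — and nonvanishing — geometric sums.
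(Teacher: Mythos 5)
Your argument is correct and follows essentially the same route as the paper's proof: use multiplicativity of $\psi$ to factor the Fourier coefficient into a product of one-variable geometric sums $S_i$, with the $i=n$ factor collapsing by orthogonality and the $i<n$ factors evaluating to the displayed rational expression. The only notable difference is presentational: you verify the dichotomy (and the nonvanishing of the $S_i$ for $i<n$, and the constancy of the $p$-adic valuation) by tracking exact multiplicative orders of the relevant roots of unity rather than invoking Lemma~\ref{lem:lastdigitgen} as a black box, which makes the proof a bit more self-contained but does not change the underlying idea.
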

\begin{proof}
Let $\chi=(\chi_1,\ldots,\chi_n)\in \prod_{i=1}^n \widehat{\Z/p^{m_i}}$. Then since $\psi$ is an additive character, we see that for  $a=(a_1,\ldots,a_n)$ with $a_i\in \Z/p^{m_i}$; 
\begin{align} 
\psi((d_{n,\underline{m}})^{-1}(a))\overline{\chi(a)}&=\psi\left(\sum_{i=1}^{n}\tilde{a}_ip^{m_1+\ldots+m_{i-1}}\right)\overline{\chi}(a)\\
&= \prod_{i=1}^n (\psi(\tilde{a}_ip^{m_1+\ldots+m_{i-1}})\overline{\chi}_i(a_i)),
\end{align}
which yields the factorization
$$\langle \psi\circ (d_{n,\underline{m}})^{-1}, \chi \rangle= \prod_{i=1}^{n}\left(\sum_{a=0}^{p^{m_i}-1}\psi(ap^{m_1+\ldots+m_{i-1}}) \overline{\chi}_i(a)\right).$$
By Lemma \ref{lem:lastdigitgen} and orthogonality of characters, we see that the above vanishes unless 
$$\chi_n(1)=\psi^{p^{m_1+\ldots+m_{n-1}}}(1),$$
which yields the first claim. For $i<n$, we get a geometric series which amounts to 
\begin{align*}\sum_{a=0}^{p^{m_i}-1}\psi(ap^{m_1+\ldots+m_{i-1}}) \overline{\chi}_i(a)&= \sum_{a=0}^{p^{m_i}-1}(\psi(p^{m_1+\ldots+m_{i-1}})\overline{\chi}_i(1))^a\\
&=\frac{\psi(p^{m_1+\ldots+m_{i}})-1}{\psi(p^{m_1+\ldots+m_{i-1}})\overline{\chi_i}(1)-1}
\end{align*}
using that $\chi_i(1)^{p^{m_i}}=1$. This implies the desired formula. Finally it follows by the above formula that the $p$-adic valuation is the same for all the non-zero Fourier coefficients.
\end{proof}
\begin{remark}\label{rem:generaldigitfourier}
If one considers more general digit maps $d:\prod_{i=1}^\N \Z/p^{m_i}\xrightarrow{\sim} \Z_p$ as described in Remark \ref{rem:moregeneraldigit}, one can prove the following; with $\psi:\Z_p\rightarrow \C_p^\times$ and $\chi=(\chi_1,\ldots,\chi_n)$ as in Lemma \ref{lem:lastdigitexpansion}, we have that $\langle \psi\circ (d_{n,\underline{m}})^{-1}, \chi\rangle=0$ when $\chi_n$ is trivial. The Fourier coefficients do however not seem to admit a nice expression in this generality and in particular the $p$-valuation is not the same for the different Fourier coefficients. 
\end{remark}
From this we get the following translation between horizontal and vertical measures.
\begin{corollary}\label{cor:lastdigit}
Let $\underline{m}=(m_n)_{n\in \N}$ be a sequence of positive integers and let $\Lambda_{R,\underline{m}}^\mathrm{dig}$ be the associated digit algebra. Let $\nu\in \Lambda_{R,\underline{m}}^\mathrm{dig}$ be a horizontal measure and let $\psi$ be a character of $\Z_p$ of conductor $p^N$ with 
$$m_1+\ldots+m_{n}<N\leq m_1+\ldots+m_{n+1}.$$ 
Then we have
\begin{align}\label{eq:Fourierexpansionhor}
d_{\infty,\underline{m}}(\nu)(\psi)=&\frac{\psi(p^{m_1+\ldots+m_n})-1}{p^{m_1+\ldots+m_n}(\psi(1)-1)}\\
&\cdot\sum_{(\chi_i)\in \Pi_{i=1}^n\widehat{\Z/p^{m_i}}}c_{\psi}(\chi_1,\ldots, \chi_{n})\nu(\chi_1,\ldots,\chi_{n},\psi^{p^{m_1+\ldots+m_{n}}} ),   
\end{align} 
where 
\begin{align}c_{\psi}(\chi_1,\ldots, \chi_{n})=\prod_{i=1}^{n} \frac{\psi(p^{m_1+\ldots+m_{i-1}})-1}{\psi(p^{m_1+\ldots+m_{i-1}})\overline{\chi_i}(1)-1}. \end{align}
\end{corollary}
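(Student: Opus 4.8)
The plan is to reduce the statement to a finite-level computation, apply Fourier inversion to express $d_{\infty,\underline{m}}(\nu)$ evaluated at $\psi$ as a sum over the characters $(\chi_1,\dots,\chi_n)$, and then invoke Lemma \ref{lem:lastdigitexpansion} to kill all but one ``last-digit'' component and identify the surviving Fourier coefficients. First I would fix $N$ with $m_1+\dots+m_n < N \le m_1+\dots+m_{n+1}$ and work at the finite level $\prod_{i=1}^{n+1}\Z/p^{m_i}$, which under $d_{n+1,\underline{m}}$ is carried isomorphically onto $\Z/p^{m_1+\dots+m_{n+1}}$. Since $\psi$ has conductor $p^N \le p^{m_1+\dots+m_{n+1}}$, it descends to a character of this finite quotient, so by the definition of $d_{\infty,\underline{m}}$ (which is the projective limit of the $d_{n,\underline{m}}$) we have $d_{\infty,\underline{m}}(\nu)(\psi) = (d_{n+1,\underline{m}})_*(\nu_{n+1})(\psi)$, where $\nu_{n+1}$ is the image of $\nu$ at level $n+1$. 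It is then enough to unwind this single pushforward.

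Next I would apply Fourier inversion on the finite group $\prod_{i=1}^{n+1}\Z/p^{m_i}$, writing $\psi \circ (d_{n+1,\underline{m}})^{-1}$ in terms of its Fourier coefficients against characters $\chi = (\chi_1,\dots,\chi_{n+1})$; pairing against $\nu_{n+1}$ and using that $\nu_{n+1}$ acts on characters by evaluation, we obtain
\begin{equation}
d_{\infty,\underline{m}}(\nu)(\psi) = \frac{1}{p^{m_1+\dots+m_{n+1}}}\sum_{\chi}\langle \psi\circ (d_{n+1,\underline{m}})^{-1},\chi\rangle\, \nu(\chi_1,\dots,\chi_{n+1}).
\end{equation}
Now Lemma \ref{lem:lastdigitexpansion}, applied with $n$ replaced by $n+1$ and $k = N - (m_1+\dots+m_n)$, says this inner Fourier coefficient vanishes unless $\chi_{n+1} = \psi^{p^{m_1+\dots+m_n}}$, and otherwise equals $p^{m_{n+1}}\prod_{i=1}^{n}\frac{\psi(p^{m_1+\dots+m_i})-1}{\psi(p^{m_1+\dots+m_{i-1}})\overline{\chi_i}(1)-1}$. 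Substituting this, the factor $p^{m_{n+1}}$ cancels part of the normalization $p^{m_1+\dots+m_{n+1}}$ leaving $p^{-(m_1+\dots+m_n)}$; peeling off the $i$-dependent product, the $i=n$ term contributes $\psi(p^{m_1+\dots+m_n})-1$ in the numerator and $\psi(p^{m_1+\dots+m_{n-1}})\overline{\chi_n}(1)-1$ in the denominator — but since $\chi_n$ now ranges freely over $\widehat{\Z/p^{m_n}}$ this term stays inside the sum, so the cleanest bookkeeping is to separate the $i=n$ factor only to the extent needed to match the stated form. Comparing carefully, the prefactor $\frac{\psi(p^{m_1+\dots+m_n})-1}{p^{m_1+\dots+m_n}(\psi(1)-1)}$ in the claim is obtained by pulling the $i=n$ numerator out front and recognizing that the $i=1$ denominator gives $\psi(p^{m_0})\overline{\chi_1}(1)-1$ with $m_0 := 0$, i.e.\ $\psi(1)\overline{\chi_1}(1)-1$; a small re-indexing identity then rewrites the product over $i=1,\dots,n$ as $(\psi(1)-1)^{-1}$ times the product $c_\psi(\chi_1,\dots,\chi_n)$ as defined, after a telescoping shift of which $\psi$-value sits in the numerator versus the denominator. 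This bookkeeping step — making the index shift between the numerator exponents $m_1+\dots+m_i$ and the denominator exponents $m_1+\dots+m_{i-1}$ line up so that exactly one factor telescopes out to the front while the $\overline{\chi_i}(1)$-dependent denominators remain, and tracking that $\psi(1)-1 \ne 0$ so the division is legitimate — is the only real subtlety; everything else is the formal machinery of Fourier inversion on a finite abelian group together with the already-proved Lemma \ref{lem:lastdigitexpansion}.

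I expect the main obstacle to be purely notational rather than conceptual: keeping straight the three overlapping indexing conventions (the partial sums $m_1+\dots+m_i$, the ``shifted'' partial sums appearing in denominators, and the truncation level $n$ versus $n+1$) so that the cancellation of $p^{m_{n+1}}$ against the normalization and the extraction of the front factor $\tfrac{\psi(p^{m_1+\dots+m_n})-1}{p^{m_1+\dots+m_n}(\psi(1)-1)}$ come out exactly as stated. Once the indexing is fixed, the corollary is an immediate transcription of Lemma \ref{lem:lastdigitexpansion} through the isomorphism $d_{\infty,\underline{m}}$.
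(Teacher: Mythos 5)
Your proposal is correct and follows exactly the route the paper intends: reduce to the finite quotient $\prod_{i=1}^{n+1}\Z/p^{m_i}$, apply Fourier inversion (\ref{eq:Finv}), invoke Lemma \ref{lem:lastdigitexpansion} with $n$ replaced by $n+1$ to kill all but the $\chi_{n+1}=\psi^{p^{m_1+\cdots+m_n}}$ terms, cancel $p^{m_{n+1}}$ against $|G|$, and perform the telescoping re-index $\prod_{i=1}^n\bigl(\psi(p^{m_1+\cdots+m_i})-1\bigr)=\frac{\psi(p^{m_1+\cdots+m_n})-1}{\psi(1)-1}\prod_{i=1}^n\bigl(\psi(p^{m_1+\cdots+m_{i-1}})-1\bigr)$ to match the stated prefactor and $c_\psi$. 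The paper compresses all of this into the single phrase ``by reordering the factors appropriately,'' and you have correctly identified that this re-indexing (together with noting $\psi(1)\neq 1$) is the only nontrivial bookkeeping step.
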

\begin{proof}
This follows directly from taking the Fourier expansion $d_{\infty,\underline{m}}(\nu)(\psi)$ in terms of the characters of $\prod_{i=1}^n \Z/p^{m_i}$ as in (\ref{eq:Finv}) combined with Lemma \ref{lem:lastdigitexpansion} by reordering the factors appropriately.
\end{proof}
\begin{remark}\label{rem:intermsof}
We will be applying Corollary \ref{cor:lastdigit} to measures $\rho(\nu)\in \Lambda_{R,\underline{m}}^\mathrm{dig}$ obtained from a horizontal measure $\nu\in \Lambda_R^\mathrm{hor}$ and a choice of projections $\rho_n:G_n\twoheadrightarrow \Z/p^{m_n}$. In this case the right-hand side of (\ref{eq:Fourierexpansionhor}) can be written in terms of $\nu$ evaluated at characters $\chi:G_\N\rightarrow  \C_p^\times$ which factor through the projection $\prod_{i=1}^n \rho_i$. In particular, if $m_n=1$ for all $n\geq 1$ this corresponds exactly to character of order $p$ (as well as the trivial character).  
\end{remark}

\subsection{Valuation of twisted moments}\label{sec:widemoment}
Using the above properties of the digit map we arrive at the following general result for twisted $p$-adic moments of horizontal measures. We let $\C_p$ be the $p$-adic complex numbers and fix a $p$-adic valuation $v_p:\C_p\rightarrow (-\infty,\infty]$ normalized so that $v_p(p)=1$.  
\begin{proposition}\label{prop:widemoment}
Let $R\subset \C_p$ be the ring of integers of a finite extension of $\Q_p$ with ramification index $e\geq 1$. Let $\nu\in \Lambda_R^\mathrm{hor}$ be a horizontal measure in the horizontal Iwasawa algebra defined from groups $(G_n)_{n\in \N}$. Let $m\geq 1$ be an integer and assume that there exists surjections $\rho_n:G_n\twoheadrightarrow \Z/p^{m},n\geq 1$. Let $\rho:\Lambda_{R}^\mathrm{hor}\rightarrow \Lambda_{R,m}^\mathrm{dig}$ be the pushforward to the $p^m$-adic digit algebra by the associated projection as in (\ref{eq:projectionrho}).
Let $\zeta_{p}, \zeta_{p^2},\ldots$ be a sequence of compatible, primitive $p$-power roots of unity, i.e. $(\zeta_{p^{n_1}})^{p^{n_2}}=\zeta_{p^{n_1-n_2}}$ for $n_1\leq n_2$. 

Let $ \mu\in \Z_{\geq 0}\cup\{ \infty\}$ and  $ \lambda\in \Z_{\geq 0}$  denote the $\mu$ and $\lambda$-invariants of the power series associated to $d_{\infty,m}(\rho(\nu))$ via the Amice transform. Then for $n$ large enough the following holds: If $\chi_0$ denotes the character of $\Z/p^m$ satisfying $\chi_0(1)=\zeta_{p^m}$, then we have
\begin{align}\label{eq:widemoment}
v_p&\nonumber\left(\frac{1}{p^{mn}}\sum_{(\chi_i)\in \widehat{(\Z/p^m)}^{n}} \left(\prod_{i=1}^{n}u_{m,n-i}(\chi_i)\right)\rho(\nu)(\chi_1,\ldots,\chi_{n},\chi_0 )\right)\\
&=\frac{\mu}{e}-\tfrac{1}{p^m-p^{m-1}} +\frac{\lambda+1}{p^{m(n+1)}-p^{m(n+1)-1}},
\end{align}
where $u_{m,n-i}(\chi_i)$ are cyclotomic units given by
$$u_{m,n-i}(\chi_i)= \frac{\zeta_{p^{m(n-i+2)}}-1}{\zeta_{p^{m(n-i+2)}}\overline{\chi_i(1)}-1},\quad 1\leq i\leq n.$$
Furthermore, $\mu= \infty$ (meaning that quantity in the argument of $v_p$ on the left-hand side of (\ref{eq:widemoment}) is zero for $n$ sufficiently large) if and only if $\rho(\nu)=0$.
\end{proposition}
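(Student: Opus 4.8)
The plan is to collapse the twisted moment in (\ref{eq:widemoment}) to a single special value of the Amice power series attached to $d_{\infty,m}(\rho(\nu))$, and then to read off its valuation from the Weierstrass factorization. First I would apply Corollary \ref{cor:lastdigit} to the measure $\rho(\nu)\in\Lambda_{R,m}^{\mathrm{dig}}$, taking $\underline m=(m,m,\dots)$ the constant sequence and $\psi=\psi_n$ the additive character of $\Z_p$ given by $\psi_n(x)=\zeta_{p^{m(n+1)}}^{\,x}$, which has conductor $p^{m(n+1)}$. Since $m_1+\dots+m_n=mn<m(n+1)=m_1+\dots+m_{n+1}$, the hypothesis of the corollary is met. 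Using the compatibility $\zeta_{p^{a}}^{\,p^{b}}=\zeta_{p^{a-b}}$ one computes $\psi_n(p^{m(i-1)})=\zeta_{p^{m(n-i+2)}}$ and $\psi_n^{p^{mn}}=\chi_0$, so that the coefficient $c_{\psi_n}(\chi_1,\dots,\chi_n)$ appearing in Corollary \ref{cor:lastdigit} is exactly $\prod_{i=1}^n u_{m,n-i}(\chi_i)$, while the scalar prefactor $\tfrac{\psi_n(p^{mn})-1}{p^{mn}(\psi_n(1)-1)}$ becomes $\tfrac{\zeta_{p^m}-1}{p^{mn}(\zeta_{p^{m(n+1)}}-1)}$. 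Rearranging the resulting identity gives
\[
\frac{1}{p^{mn}}\sum_{(\chi_i)\in\widehat{(\Z/p^m)}^{n}}\Bigl(\prod_{i=1}^n u_{m,n-i}(\chi_i)\Bigr)\rho(\nu)(\chi_1,\dots,\chi_n,\chi_0)=\frac{\zeta_{p^{m(n+1)}}-1}{\zeta_{p^m}-1}\,d_{\infty,m}(\rho(\nu))(\psi_n).
\]

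Next I would let $F(T)\in R\llbracket T\rrbracket$ be the Amice transform of $d_{\infty,m}(\rho(\nu))$ (Theorem \ref{thm:Amice}) and identify $d_{\infty,m}(\rho(\nu))(\psi_n)=F(\zeta_{p^{m(n+1)}}-1)$ via the expansion $\zeta^{x}=\sum_k\binom{x}{k}(\zeta-1)^{k}$ together with term-by-term integration, which is legitimate because $v_p\bigl((\zeta_{p^{m(n+1)}}-1)^{k}\bigr)\to\infty$. Writing $F=\pi^{\mu}g(T)u(T)$ as in Theorem \ref{thm:Weierstrassprep}, with $g$ distinguished of degree $\lambda$ and $u$ a unit, and using $v_p(\zeta_{p^{m(n+1)}}-1)=\tfrac{1}{p^{m(n+1)}-p^{m(n+1)-1}}$, I would note that for $n$ large this valuation is so small (below $\tfrac{1}{e\lambda}$, vacuous if $\lambda=0$) that the leading term of $g$ dominates all others, whence $v_p\bigl(g(\zeta_{p^{m(n+1)}}-1)\bigr)=\lambda\, v_p(\zeta_{p^{m(n+1)}}-1)$, while $v_p\bigl(u(\zeta_{p^{m(n+1)}}-1)\bigr)=0$ and $v_p(\pi^{\mu})=\mu/e$ since $e$ is the ramification index. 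Hence $v_p\bigl(F(\zeta_{p^{m(n+1)}}-1)\bigr)=\tfrac{\mu}{e}+\tfrac{\lambda}{p^{m(n+1)}-p^{m(n+1)-1}}$, and substituting this, together with $v_p(\zeta_{p^m}-1)=\tfrac{1}{p^m-p^{m-1}}$, into the identity above yields (\ref{eq:widemoment}).

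For the vanishing criterion I would argue that since $d_{\infty,m}$ and the Amice transform are isomorphisms, $\mu=\infty\iff F=0\iff\rho(\nu)=0$; and if $\rho(\nu)=0$ the argument of $v_p$ is zero for every $n$, while conversely, if it is zero for all large $n$, then $F$ vanishes at the infinitely many distinct points $\zeta_{p^{m(n+1)}}-1$ of the maximal ideal of $\C_p$, forcing $F=0$ (a nonzero element of $R\llbracket T\rrbracket$ has only $\lambda$ zeros there by the Weierstrass factorization), hence $\rho(\nu)=0$. I do not expect a genuinely deep step once Corollary \ref{cor:lastdigit} is in hand: the delicate points will be the index bookkeeping in the first step — correctly matching the products $\prod_i u_{m,n-i}(\chi_i)$ and the prefactor under the specific $\psi_n$ and the chosen compatible system $(\zeta_{p^{k}})_k$ — and making ``$n$ large enough'' quantitative in the Weierstrass step, where the threshold depends on $\lambda$, on $e$, and on the $\pi$-adic valuations of the coefficients of $g$.
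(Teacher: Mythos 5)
Your proof is correct and takes essentially the same route as the paper: you apply Corollary \ref{cor:lastdigit} to the additive character $\psi_n$ of conductor $p^{m(n+1)}$ to rewrite the twisted moment as the scalar $\tfrac{\zeta_{p^{m(n+1)}}-1}{\zeta_{p^m}-1}$ times a special value of the Amice power series, then read off valuations from the Weierstrass factorization, with the vanishing criterion handled via finiteness of zeros of a nonzero distinguished polynomial. The only difference is that your write-up is more explicit than the paper's at the bookkeeping steps (identifying $c_{\psi_n}$ with $\prod_i u_{m,n-i}(\chi_i)$, identifying $\psi_n^{p^{mn}}$ with $\chi_0$, and quantifying ``$n$ large enough'' by the threshold $v_p(\zeta_{p^{m(n+1)}}-1)<1/(e\lambda)$), which the paper leaves implicit.
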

\begin{proof}
Let $\rho(\nu)\in \Lambda_{R,m}^\mathrm{dig}$ be the horizontal measure associated to $\nu$ via the projection $\rho$. Applying the $p^m$-adic digit map we obtain a vertical measure $d_{\infty,m}(\rho(\nu))\in R\llbracket \Z_p\rrbracket$. Let $f_{\nu,m}(T)\in R\llbracket T\rrbracket $ be the power series associated to $d_{\infty,m}(\rho(\nu))$ via the Amice transform (Theorem \ref{thm:Amice}). Then by Corollary \ref{cor:lastdigit} we see that the left-hand side of (\ref{eq:widemoment}) is equal to 
\begin{align*}&v_p\left(\frac{\psi(1)-1}{\psi(p^{mn})-1} f_{\nu,m}(\zeta_{p^{m(n+1)}}-1)\right)\\
&= v_p\left(f_{\nu,m}(\zeta_{p^{m(n+1)}}-1)\right)+\tfrac{1}{p^{m(n+1)}-p^{m(n+1)-1}}-\tfrac{1}{p^m-p^{m-1}}.
\end{align*}
Assume that $\rho(\nu)\neq 0$. Then since the digit map is an $R$-isomorphism we conclude that $d_{\infty,m}(\rho(\nu))\in R\llbracket \Z_p \rrbracket$ is non-zero. Thus by the Weierstrass Preparation Theorem (Theorem \ref{thm:Weierstrassprep}) we can write
\begin{equation}\label{eq:Weierstrassprep}f_{\nu,m}(T)=\pi^{\mu} u(T) \left(T^{\lambda}+\sum_{i=1}^{\lambda} T^{\lambda-i}a_i \right) ,\end{equation}
where  $\pi$ is the uniformizer of $R$, $\mu=\mu(f_{\nu,m})$, $\lambda=\lambda(f_{\nu,m})$, $v_p(a_i)>0$ for $1\leq i \leq \lambda$ and $u(T)\in R\llbracket T\rrbracket^\times$. Now as $n\rightarrow \infty$ we see that 
$$v_p\left( (\zeta_{p^{m(n+1)}}-1)^{\lambda}\right)= \frac{\lambda}{p^{m(n+1)}-p^{m(n+1)-1}}\rightarrow 0.$$
This implies by (\ref{eq:Weierstrassprep}) that for $n$ large enough we have
$$v_p\left(f_{\nu,m}(\zeta_{p^{m(n+1)}}-1)\right)=\frac{\mu}{e}+\frac{\lambda}{p^{m(n+1)}-p^{m(n+1)-1}}, $$
since $u(\zeta_{p^{m(n+1)}}-1)$ is a $p$-adic unit. This yields the desired evaluation upon rearranging the terms appropriately. 
\end{proof}

\subsection{Proof of the main theorem}We are now in a position to prove our main theorem. 

\begin{proof}[Proof of Theorem \ref{thm:widemoment2}]Let $f_1,\ldots, f_M$ be holomorphic newforms of even weights $k_1,\ldots, k_M$. Applying Proposition \ref{prop:widemoment} to $\nu = \prod_{i = 1}^M\nu^{\pm}_{f_i,\mathcal{L},r}$ from (\ref{eq:horpadicL}), with $\rho:\Lambda_{R}^\mathrm{hor}\rightarrow \Lambda_{R,1}^\mathrm{dig}$ equal to the pushforward to the $p$-adic digit algebra defined by the canonical projections $\rho_n:\Z/p^{m_n}\rightarrow  \Z/p$, we get all the assertions once we note that 
\[\rho(\prod_{i = 1}^M\nu^{\pm}_{f_i,\mathcal{L},r})(\tilde{\chi}_1,\ldots,\tilde{\chi}_{n}) = \prod_{i = 1}^ML_f^{\ast}(\chi_1\cdots \chi_n,k_i/2)\]
by the interpolation property (\ref{eq:interpolationmeasure}), for $\chi_i\modulo \ell_i$ an order $p$ Dirichlet characters corresponding to $\tilde{\chi}_i: \Z/p\rightarrow \C_p^\times$ (i.e. $\tilde{\chi}_i(1)=\chi_i(b_i)$ where $b_i$ denotes the (chosen) generators of $(\Z/\ell_i)^\times$).
\end{proof}

\begin{remark}It is clear that Proposition \ref{prop:widemoment} can be applied to $\nu = \prod_{i = 1}^M\nu^{\pm}_{f_i,\mathcal{L},r}$ with $\rho : \Lambda_R^{\mathrm{hor}} \rightarrow \Lambda_{R,m}^{\mathrm{dig}}$ with any $m \in \Z_{\ge 1}$ to yield a generalization of Theorem \ref{thm:widemoment2}. For simplicity of exposition, we have only written out the statement for $m = 1$.
\end{remark}
\section{On the constants in the asymptotic formula}\label{sec:constant} The quantities $\mu$ and $\lambda$  appearing in (\ref{eq:widemoment2}) are in general very mysterious and might even depend on the ordering of the Taylor--Wiles primes. In this section we will show how under certain assumptions one can relate them to natural invariants of horizontal measure (see \cite[Definition 2.15]{KrizNordentoft}) and  determine them in terms of arithmetic invariants. 

\subsection{Kato--Kolyvagin derivatives}\label{sec:firstdigit} For the rest of this section, let $R$ be the ring of integers of a finite extension of $\Q_p$. Let $\nu\in \Lambda_{R}^\mathrm{dig}$ be an element of the digit algebra and denote by $f_{\nu}(T)\in R\llbracket T\rrbracket$ the associated power series obtained by applying the Amice transform (\ref{eq:amice}) to $d_{\infty,1}(\mu) \in R\llbracket \Z_p\rrbracket$.  Let $\pi_r:(\Z/p)^\N\twoheadrightarrow (\Z/p)^r$ denote the projection to the first $r$ factors and let $\nu_r:=\pi_r(\nu)\in R[(\Z/p)^r]$ denote the measure obtained by pushforward. For an integer $r\geq 0$, we define the \emph{$r^\mathrm{th}$ Kato--Kolyvagin derivative of $\nu$}  to be 
\begin{align}\label{eq:Dr}
 D^r \nu&:=\sum_{a_1=1}^{p}\cdots \sum_{a_r=1}^{p} \left(\prod_{i=1}^r a_i\right)\nu(\mathbf{1}_{\pi_r^{-1}\{(a_1,\ldots, a_r)\}})\\
 &=\sum_{a_1=1}^{p}\cdots \sum_{a_r=1}^{p} \left(\prod_{i=1}^r a_i\right)\nu_r([(a_1,\ldots, a_r)])\in R.  
 \end{align} 
These correspond to Kolyvagin derivatives considered in \cite{WZhang} in the setting of elliptic curves over $\mathbb{Q}$ considered over imaginary quadratic fields. When $r=0$ this is understood to mean $D^0\nu=\nu(\mathbf{1})$ where $\mathbf{1}$ denotes the trivial character. When $r = 1$ we will write $D^r = D$ for brevity. 

The main result of this section is a result relating the power series associated via the digit construction with the Kato--Kolyvagin derivative. For simplicity of exposition, we will henceforth only consider the case $r \leq  1$, although many of the results in the sequel admit generalizations to general $r$ by considering analogues of the digit map to power series algebras over $\Z_p$ in $r$ variables.
\begin{proposition}\label{prop:kolyvaginderiv}
Let $\nu\in \Lambda_R^\mathrm{dig}$ be a horizontal measure and denote by $f_{\nu}(T)\in R\llbracket T\rrbracket$ the associated power series obtained by applying the Amice transform (Theorem \ref{thm:Amice}) to $d_{\infty,1}(\mu) \in R\llbracket \Z_p\rrbracket$ (where $d_{\infty,1}$ is the digit map of Definition \ref{def:digitgen}). Then we have
\begin{align}
    f_{\nu}(0)= D^0\nu=\nu(\mathbf{1})
\end{align}
and the following congruences between elements of $R$, 
\begin{align}\label{eq:derivativecong} \frac{d}{dT}f_{\nu}(T)_{|T=0}&\equiv D\nu\modulo p. 
\end{align}
\end{proposition}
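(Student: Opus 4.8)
The plan is to compute both quantities directly in terms of the digit map and the Amice transform. Recall that by Theorem~\ref{thm:Amice} the power series attached to a vertical measure $\mu_0 \in R\llbracket \Z_p\rrbracket$ is $\sum_{n\geq 0}\mu_0\!\left(\binom{x}{n}\right)T^n$, so its value at $T=0$ is $\mu_0(\binom{x}{0}) = \mu_0(\mathbf{1})$ and the coefficient of $T^1$ (hence $\tfrac{d}{dT}f_\nu|_{T=0}$) is $\mu_0(\binom{x}{1}) = \mu_0(x)$, i.e. the integral of the identity function $x \mapsto x$ on $\Z_p$. Here $\mu_0 = d_{\infty,1}(\nu)$. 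So everything reduces to understanding, for a function $g : \Z_p \rightarrow \Z_p$, how the integral $\int_{\Z_p} g\, d\, d_{\infty,1}(\nu)$ relates to $\nu$ evaluated against the pulled-back function $g \circ d_{\infty,1}$ on $(\Z/p)^\N$.

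For the first identity I would take $g \equiv 1$: then $g \circ d_{\infty,1} \equiv 1$, and since $d_{\infty,1}$ is an isomorphism of $R$-modules of measure spaces compatible with total mass (it sends the transition maps $\pi'_n$ to the standard projections $\Z/p^{n}\to\Z/p^{n-1}$), we get $f_\nu(0) = d_{\infty,1}(\nu)(\mathbf 1) = \nu(\mathbf 1) = D^0\nu$. For the congruence, I would take $g(x) = x$, the identity function on $\Z_p$. The key computation is to identify $x \circ d_{\infty,1}$ as a function on $(\Z/p)^\N$, at least modulo $p$. By definition of the digit map, $d_{n,1}((a_1,\dots,a_n)) = \sum_{i=1}^n \tilde a_i p^{i-1}$, so on the $n$th finite level the pulled-back identity function is $(a_1,\dots,a_n) \mapsto \sum_{i=1}^n \tilde a_i p^{i-1}$, which modulo $p$ is just $\tilde a_1$, i.e. it depends only on the first digit. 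Since $\nu$ is a measure and these functions are compatible as $n$ grows, $\int x \, d\, d_{\infty,1}(\nu) \equiv \sum_{a_1=1}^{p} a_1 \cdot \nu_1([a_1]) \pmod p$, where $\nu_1 = \pi_1(\nu)$; but the right-hand side is exactly $D\nu$ by the definition~(\ref{eq:Dr}) with $r=1$. Hence $\tfrac{d}{dT}f_\nu|_{T=0} \equiv D\nu \pmod p$.

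The main obstacle is making the ``compatible as $n$ grows'' and ``modulo $p$'' steps precise simultaneously: the integral $\int x\, d\,d_{\infty,1}(\nu)$ is genuinely a limit of the finite-level sums $\sum_{(a_i)} \big(\sum_i \tilde a_i p^{i-1}\big)\nu_n([(a_1,\dots,a_n)])$, and one must check that truncating the digit expansion to its first digit only changes this by something in $pR$ uniformly in $n$ — which works because each term $\tilde a_i p^{i-1}$ for $i\geq 2$ lies in $pR$ and the measures have bounded mass (they take values in $R$). One clean way to phrase this is: the function $x - \tilde a_1$ (first digit) on $\Z_p$ takes values in $p\Z_p$, so its integral against any $R$-valued measure lies in $pR$; thus $\int x\, d\,d_{\infty,1}(\nu) \equiv \int (\text{first digit})\, d\,d_{\infty,1}(\nu) = D\nu \pmod{pR}$, the last equality being a finite computation at level $1$. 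I would also remark that an entirely analogous argument, using the digit map into power series in $r$ variables, would give the higher-$r$ statement $\frac{1}{r!}\frac{\partial^r}{\partial T_1\cdots\partial T_r}f_\nu|_{T=0} \equiv D^r\nu \pmod p$, but as the paper states we restrict to $r\leq 1$.
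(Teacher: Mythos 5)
Your proposal is correct and follows essentially the same route as the paper: read off $f_\nu(0)$ and $f_\nu'(0)$ from the Amice transform as $d_{\infty,1}(\nu)(\binom{x}{0})$ and $d_{\infty,1}(\nu)(\binom{x}{1})=d_{\infty,1}(\nu)(x)$, then reduce $x$ modulo $p$ to the ``first digit'' step function $\sum_{a=0}^{p-1} a\,\mathbf{1}_{a+p\Z_p}$ and pull back along $d_{\infty,1}$ to recognize $D\nu$; the paper phrases the congruence exactly via this decomposition of $x$ rather than the phrasing ``$x$ minus the first digit is $p$-divisible,'' but these are the same observation. One small slip in your closing remark: the multivariable generalization reads $\frac{\partial^r}{\partial T_1\cdots\partial T_r}f_{\nu,r}|_{T=0}\equiv D^r\nu\pmod p$ with no $\tfrac{1}{r!}$, since the mixed partial in distinct variables already extracts the coefficient of $T_1\cdots T_r$ without a factorial.
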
 
\begin{proof}
By the definition of the Amice transform (Theorem \ref{thm:Amice}) we have
$$f_{\nu}(T)= \sum_{n = 0}^{\infty}d_{\infty,1}(\nu)\left(\binom{x}{n}\right)T^n ,$$
where $d_{\infty,1}: (\Z/p)^\N\rightarrow \Z_p$ denotes the digit map defined above. By plugging in $T=0$ we conclude the first equality.

Secondly, by taking the derivative in $T$ we  arrive at  
\begin{equation}\label{eq:partialderiv}\frac{d}{dT}f_{\nu}(T)_{|T=0}=d_{\infty,1}(\nu)(x),\end{equation} 
where $x:\Z_p\rightarrow \Z_p$ denotes the moment function $x\mapsto x$. This we can rewrite modulo $p$ as follows
\begin{align}\label{eq:momcongr} d_{\infty,1}(\nu)(x)
&\equiv \sum_{a=0}^{p-1}a\cdot d_{\infty,1}(\nu)\left(\mathbf{1}_{a+p\Z_p}\right)\\
& \equiv\sum_{a=0}^{p-1}a\cdot \nu\left(\mathbf{1}_{(\pi_1)^{-1}\{a\}}\right)\equiv D\nu \modulo p,\end{align}
which yields the desired statement.
\end{proof}
\begin{corollary}\label{cor:kolynonvan} Let $\nu\in \Lambda_R^\mathrm{dig}$ be a horizontal measure. Then:
\begin{enumerate}
    \item $D^0\nu=\nu(\mathbf{1})$ is a $p$-unit if and only if $\mu(f_\nu)=\lambda(f_\nu)=0$.
    \item $D^0\nu=\nu(\mathbf{1})$ is not a $p$-unit and $D\nu$ is a $p$-unit if and only if  $\mu(f_\nu)=0$ and $\lambda(f_\nu)=1$.
\end{enumerate}
\end{corollary}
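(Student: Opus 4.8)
The plan is to extract the two lowest-order Taylor coefficients of $f_\nu$ at $T=0$ from the Weierstrass factorization of Theorem \ref{thm:Weierstrassprep} and combine them with the identities of Proposition \ref{prop:kolyvaginderiv}. Throughout, a $p$-unit means an element of $R^\times$, equivalently an element not lying in the maximal ideal $(\pi)$. I would first record the only point needing care: Proposition \ref{prop:kolyvaginderiv} pins down $D\nu$ only modulo $p$, but since the ramification index $e$ of $R$ over $\Z_p$ is $\geq 1$ we have $pR\subseteq(\pi)$, so a congruence modulo $pR$ already determines membership in $R^\times$. Hence $\tfrac{d}{dT}f_\nu(T)|_{T=0}\equiv D\nu\modulo p$ shows that $D\nu$ is a $p$-unit if and only if $f_\nu'(0)\in R^\times$, and trivially $D^0\nu=f_\nu(0)$ is a $p$-unit if and only if $f_\nu(0)\in R^\times$.

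Next I would dispose of the degenerate case $\nu=0$: then $f_\nu=0$, $\mu(f_\nu)=\infty$ and $D^0\nu=D\nu=0$, so neither side of (1) nor of (2) holds and both biconditionals are vacuously true. So assume $f_\nu\neq 0$ and write, via Theorem \ref{thm:Weierstrassprep},
$$f_\nu(T)=\pi^{\mu}g(T)u(T),\qquad g(T)=T^{\lambda}+c_{\lambda-1}T^{\lambda-1}+\cdots+c_0,$$
with $\mu=\mu(f_\nu)$, $\lambda=\lambda(f_\nu)$, each $c_j\in(\pi)$, $u\in R\llbracket T\rrbracket^{\times}$, and $g=1$ when $\lambda=0$. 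The elementary input is the behaviour of the constant and linear coefficients of $g$: they are $(1,0)$ when $\lambda=0$; they are $(c_0,1)$ with $c_0\in(\pi)$ when $\lambda=1$; and they are $(c_0,c_1)$ with $c_0,c_1\in(\pi)$ when $\lambda\geq 2$.

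For part (1): since $u(0)\in R^\times$, the constant term $f_\nu(0)=\pi^{\mu}g(0)u(0)$ lies in $R^\times$ if and only if $\mu=0$ and $g(0)\in R^\times$, which by the list above happens exactly when $\mu=0$ and $\lambda=0$. For part (2): assume $D^0\nu$ is not a $p$-unit, so by (1) we are not in the case $\mu=\lambda=0$. The linear coefficient $f_\nu'(0)=\pi^{\mu}\left(g'(0)u(0)+g(0)u'(0)\right)$ lies in $(\pi)$ whenever $\mu\geq 1$, and also whenever $\mu=0$ and $\lambda\geq 2$ (since then $g(0),g'(0)\in(\pi)$), whereas for $\mu=0,\lambda=1$ it equals $u(0)+c_0u'(0)\equiv u(0)\modulo (\pi)$, hence is a unit. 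Therefore $f_\nu'(0)\in R^\times$ together with the exclusion of $\mu=\lambda=0$ forces $\mu=0,\lambda=1$; conversely $\mu=0,\lambda=1$ immediately gives $f_\nu(0)=c_0u(0)\in(\pi)$ and $f_\nu'(0)\equiv u(0)\in R^\times$. Translating back through the first paragraph yields statement (2).

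I do not anticipate a genuine obstacle: once Proposition \ref{prop:kolyvaginderiv} and Theorem \ref{thm:Weierstrassprep} are in hand, the argument is bookkeeping with the Weierstrass normal form. The two things worth flagging are the passage from a congruence modulo $p$ to membership in $R^\times$ (legitimate precisely because $pR\subseteq(\pi)$) and the separate treatment of $\nu=0$, without which statement (1) would have an uncovered edge case.
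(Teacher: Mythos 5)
Your proof is correct and follows essentially the same route as the paper's: translate $D^0\nu$ and $D\nu$ to the constant and linear coefficients of $f_\nu$ via Proposition \ref{prop:kolyvaginderiv} (using $pR\subseteq(\pi)$ to upgrade the mod-$p$ congruence to a statement about $R^\times$-membership), then read off which $(\mu,\lambda)$ make these coefficients units from the Weierstrass factorization. Your version is more explicit — you spell out the coefficient casework and treat the degenerate $\nu=0$ case separately — but the key ingredients and logical structure are identical to the paper's argument.
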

\begin{proof}
(1) From Proposition \ref{prop:kolyvaginderiv} we conclude $\nu(\mathbf{1})$ is a $p$-unit exactly if $f_\nu(0)$ is a $p$-unit, which is equivalent to $f_\nu$ being invertible as an element of $R\llbracket T \rrbracket $. By Theorem \ref{thm:Weierstrassprep} this is equivalent to $\mu(f_\nu)=\lambda(f_\nu)=0$.

\noindent (2)  To prove the first implication, note that by Proposition \ref{prop:kolyvaginderiv} the assumption that $D\nu$ is a $p$-unit implies that $\frac{d}{dT}f_{\nu}(T)_{|T=0}$ is a $p$-unit. Taking a derivative in Theorem \ref{thm:Weierstrassprep}, we conclude that $\mu(f_\nu)=0$ and $\lambda(f_\nu)\leq 1$. Since $D^0\nu$ is not a $p$-unit we conclude that $\lambda(f_\nu)=1$. The other implication follows easily by a similar argument.
\end{proof}
We note that under the assumptions of  Corollary \ref{cor:kolynonvan}, it holds that $\mu(f_\nu)=\mu(\nu)$ and $\lambda(f_\nu)=\lambda(\nu)$ where $\mu(\nu),\lambda(\nu)$ denotes the \emph{horizontal Iwasawa invariants} of $\nu$ as defined in \cite[Definition 2.15]{KrizNordentoft}. In general, such a relation does not seem to hold.
\subsubsection{Arithmetic instances} In this final section we will explain how to apply Corollary \ref{cor:kolynonvan} to obtain the values of $\mu,\lambda$ as claimed in Remark \ref{rem:Kurihara}. The hypothesis of Corollary \ref{cor:kolynonvan} that $D^r\nu$ is a unit for $r\leq 1$  can be verified in many arithmetic situations via known cases of Kolyvagin's conjecture. We refer to \cite[Section 5.3]{KrizNordentoft} for more details on these and related matters.  \begin{enumerate}
    \item Assume that  $f_1,\ldots, f_m$ are holomorphic newform of even weights $k_i$  such that $p$ is $(f_1,\ldots, f_m)$-good, $L(f_i,k_i/2)\neq 0$ and the normalized $L$-values $L(f_i,k_i/2)/\Omega_{f_i}^+$ are all $p$-units. Note that the last condition holds for $p$ sufficiently large given the non-vanishing condition. Then by definition $\nu(\mathbf{1})$ is a $p$-unit where $\nu$ is the horizontal measure given by the product of the (plus) horizontal $p$-adic $L$-functions attached to $f_1,\ldots,f_m$ (using the joint Taylor--Wiles primes). Thus in this case we conclude from  Corollary \ref{cor:kolynonvan} that $\mu=\lambda=0$ in the asymptotic formula (\ref{eq:widemoment2}).
    \item Let $p > 2$ be an odd prime and let $E/\Q$ be an elliptic curve with positive analytic rank and with good ordinary reduction at $p$ such that $\mathrm{Sel}_{p^{\infty}}(E/\Q)$ has $\Z_p$-corank 1. Then by the main result of \cite{BurungaleCastellaGrossiSkinner} on the cyclotomic Kolyvagin conjecture, one can find a (non-Taylor--Wiles) prime $q_1\equiv 1\modulo p$ such that the congruence (\ref{eq:Kolderiv}) is satisfied. It now follows from a quick calculation (see \cite[Corollary 5.16]{KrizNordentoft}) that $D\nu$ is a $p$-unit  when $\nu$ is the horizontal $p$-adic $L$-function attached to $E$ and a sequence of primes $\ell_1,\ell_2,\ldots $ such that $\ell_1=q_1$ and $\ell_n$ is Taylor--Wiles for $n>1$ (assuming here implicitly that $p$ is $f_E$-good so that the horizontal $p$-adic $L$-function exists). Thus by Corollary \ref{cor:kolynonvan} we conclude that $\mu=0$ and $\lambda=1$ in this case. 
\end{enumerate}
 We conclude in both cases that the $\lambda$-invariant is closely related to the rank of the elliptic curve.

\bibliography{bibtexmain}
\bibliographystyle{amsplain}

\end{document}